\theoremstyle{plain}
\newtheorem{thm}[equation]{Theorem}
\newtheorem{lem}[equation]{Lemma}
\newtheorem{cor}[equation]{Corollary}
\newtheorem{prop}[equation]{Proposition}
\newtheorem{hyp}[equation]{Hypothesis}
\newtheorem{obs}[equation]{Observation}
\theoremstyle{definition}
\newtheorem{rmk}[equation]{Remark}
\numberwithin{equation}{section}
\newcommand{\isoarrow}{{~\overset\sim\longrightarrow}}
\newcommand{\fp}{{\frak{p}}}
\newcommand{\ZZ}{{\mathbb Z}}
\newcommand{\fg}{{\frak{g}}}
\newcommand{\fk}{{\frak{k}}}
\newcommand{\fh}{{\frak{t}}}
\newcommand{\fu}{{\frak{u}}}
\newcommand{\fq}{{\frak{q}}}
\newcommand{\tilc}{{\tilde{c}}}
\newcommand{\ra}{{~\rightarrow~}}
\newcommand{\QQ}{{\mathbb Q}}
\newcommand{\Qbar}{{\overline{\mathbb Q}}}
\newcommand{\RR}{{\mathbb R}}
\newcommand{\ad}{{\bold A}}
\newcommand{\af}{{{\bold A}_f}}
\newcommand{\CC}{{\mathbb C}}
\begin{document}

\title [Testing rationality of coherent cohomology]
{Testing rationality of coherent cohomology of Shimura varieties}

\author{Michael Harris}
\address{Institut de Math\'ematiques de
Jussieu, U.M.R. 7586 du CNRS; UFR de Math\'ematiques \\ Universit\'e Paris-Diderot Paris 7}

\thanks{The research leading to these results has received funding from the European Research Council under the European Community's Seventh Framework Programme (FP7/2007-2013) / ERC Grant agreement n¡ 290766 (AAMOT)}

\dedicatory{in memory of Ilya Piatetski-Shapiro}
\smallskip

\date{\today}
\thanks{ }

\begin{abstract} Let $G' \subset G$ be an inclusion of reductive groups whose real points have a
non-trivial discrete series.  Combining ergodic methods of Burger-Sarnak and the author with a positivity
argument due to Li and the classification of minimal $K$-types of discrete series,
due to Salamanca-Riba, we show that, if $\pi$ is a cuspidal automorphic representation
of $G$ whose archimedean component is a sufficiently general discrete series, then there is a cuspidal
automorphic representation of $G'$, of (explicitly determined) discrete series type at infinity, that pairs non-trivially
with $\pi$.  When $G$ and $G'$ are inner forms of $U(n)$ and $U(n-1)$, respectively, this result is used
to define rationality criteria for sufficiently general coherent cohomological forms on $G$.
\end{abstract}

\keywords{discrete series, coherent cohomology, Shimura variety, period invariants}

\subjclass[2000]{11F70, 11G18, 22E45}

\maketitle

\section*{Introduction}

For years I would meet Ilya Piatetski-Shapiro at conferences and mathematical institutes, but we only spoke
a few times, and our conversations were generally brief and to the point.  Shortly after I arrived at the
IAS in 1983 for the special year on automorphic forms -- I later came to suspect that Ilya was in part
responsible for my having been invited -- he asked me to come to his office to tell him what I had been
doing.  I explained that I had been working on defining arithmetic models of automorphic vector bundles,
to provide a general geometric framework for Shimura's characterization of arithmetic
holomorphic automorphic forms -- forms rational over number fields.  Ilya asked whether 
I had a similar characterization of arithmetic non-holomorphic 
coherent cohomology classes.  I replied that I had not thought about the question; he suggested that I
work on the problem; and that was essentially the end of the discussion.\footnote{He did ask one more question:  
was I married?  I was not, and I was surprised to discover that
he had advice for me on that matter as well.}

Non-holomorphic coherent cohomology of Shimura varieties is usually defined by 
automorphic representations whose archimedean components are in the non-holomorphic
discrete series.  Since Takeyuki Oda had also advised me to look at such automorphic
representations, I spent the next few years developing a theory that provides a geometric
framework for such cohomology classes, and in many cases I was able to find a characterization
of the classes that are rational over number fields.  But
this characterization, developed in \cite{H90}, is based on studying the cup products of
the classes in question with holomorphic arithmetic forms on Shimura subvarieties.  These cup products
are compatible with the rational structure of canonical models of Shimura varieties, and therefore
the rationality of these cup products is a necessary condition for arithmeticity of the class being tested.
The method of \cite{H90} shows that it is often also a sufficient condition, but
provides no information in the typical case, where all such cup products vanish for purely local reasons. 

 The present paper gives an optimal answer to Piatetski-Shapiro's question for coherent cohomology of
Shimura varietes attached to unitary groups, at least when the infinitesimal character is
sufficiently regular.  As in \cite{H90}, the rationality criterion is based on restriction to Shimura
subvarieties and integrating against classes on the latter, but this {\it period integral} 
in general is purely analytic and has nothing to do with algebraic geometry.
Nevertheless, the multiplicity one condition implied by the Gross-Prasad conjecture \cite{GGP} shows the
existence of canonical period invariants, called the {\it Gross-Prasad periods} in \cite{BB}, that have
the property that the proportionality of the period integral to the corresponding Gross-Prasad period
is a necessary and sufficient condition for arithmeticity.   When the infinitesimal character is
sufficiently regular, it can be shown that the period integrals do not all vanish.  Thus the criterion
provides a foolproof way to reduce arithmeticity on a Shimura variety attached to an inner form
of $U(n)$ to that on an inner form of $U(n-1)$, and by induction we obtain a general criterion.

The remarks of the preceding paragraph need to be qualified, however.  The Gross-Prasad periods
are defined in \cite{BB}, assuming a stronger version of the local Gross-Prasad conjecture at
archimedean places than is currently available.  It has been proved in \cite{SZ} that the
multiplicity one conjecture of Gross-Prasad is true for $U(n-1)$-invariant linear forms on the
smooth Frechet completions of moderate growth of discrete series representations of $U(n)\times U(n-1)$,
where here for $m = n, n-1$, $U(m)$ denotes any inner form of the compact unitary group $U(m)$.  In order
to define Gross-Prasad periods, one needs multiplicity one for invariant linear forms on the
corresponding Harish-Chandra modules; in other words, one needs to know the {\it automatic continuity}
of such invariant linear forms.  This remains an open problem, at least when $U(n-1)$ is
non-compact and the representation of $U(n)$ is not holomorphic, so in that sense the results of the present
paper are conditional.  

The methods of this paper are in large part the same as those of my article \cite{HL} with J. S. Li.
The main difference is that the earlier paper only considered rationality criteria based on
cohomological cup products.  In particular, we make heavy use of Li's idea to apply Flensted-Jensen's formula for the leading matrix coefficients
of discrete series representations to show weak inclusion of representations of $U(n-1)$ in restrictions
from $U(n)$.   We also use the method of
Burger-Sarnak, or the more elementary methods of \S 7 of \cite{H90}, 
to deduce non-vanishing of global pairings from local weak inclusion.

We note that the proof of the full Gross-Prasad conjecture for unitary groups
over archimedean fields would allow us to derive the main theorem without reference to the 
Flensted-Jensen formula.  Indeed, the full Gross-Prasad conjecture determines the discrete series representations
 of (an inner form of) $U(n-1)$ that are weakly contained in any given discrete series representation $\pi$ of
 (an inner form of) $U(n)$.  In most cases, this weak containment has nothing to do with the leading matrix
 coefficient of $\pi$.
The paper is organized in a progression from most general and conditional results -- much of the
argument is valid for general Shimura varieties -- to the most specific and unconditional.

\bigskip

I thank Dipendra Prasad for asking the question answered in this paper and Jeff Adams for directing me to the article
\cite{SR} of Salamanca-Riba.  I also thank the editors of this volume for inviting me to
contribute.

\medskip

\section{Restrictions of discrete series representations and global consequences}

Let $G$ be a connected reductive algebraic group over $\QQ$, $G' \subset G$ a reductive
subgroup.  Let $Z_G$ and $Z_{G'}$ denote their respective centers, $A_G$
and $A_{G'}$ the maximal $\RR$-split tori in $Z_G$ and $Z_{G'}$.  We assume in what follows
that $A_G$ is of finite index in $A_{G'}$.  By a discrete series representation of $G(\RR)$ we will mean an irreducible
Hilbert space representation $\pi$ of $G(\RR)$ such that, for some character $\xi$ of $G(\RR)$,
$\pi\otimes \xi$ is trivial on an open subgroup $A^0 \subset A_G(\RR)$, and such that $\pi\otimes \xi$ defines a square-integrable
representation of $G(\RR)/A^0$.   

Suppose $\pi$ and $\pi'$ are discrete series representations of $G(\RR)$ and $G'(\RR)$, respectively, and assume
that their central characters coincide on an open subgroup $A^0 \subset A_G(\RR)$. 
\begin{hyp}\label{weak}  The representation $\pi'$ is weakly contained in the restriction of $\pi$ to $G'(\RR)$; in other words,
there is a non-trivial Hilbert space pairing 
$$\pi \otimes \pi^{\prime,\vee} \ra \CC
$$
invariant under $G'(\RR)/A^0$.
\end{hyp}

Suppose $\pi$ occurs in the $L^2$-automorphic spectrum of $G$;
in other words, there is an equivariant homomorphism $\lambda:  \pi \ra C^{\infty}(\Gamma \backslash G(\RR))$, for
some congruence subgroup $\Gamma \subset G(\QQ)$, such that, for some character $\xi$
of $G(\RR)$ trivial on $\Gamma$ and for an open subgroup $A^0 \subset A_G(\RR)$ as above 
$\lambda(\pi)\otimes \xi \subset L^2(\Gamma\cdot A^0\backslash G(\RR))$.  
Theorem 1.1(a) of \cite{BS} then asserts that $\pi'$ is weakly contained in the $L^2$ automorphic spectrum of 
$G'$:  in other words

\begin{thm}\label{BS} (Burger-Sarnak)  Assume the pair $(\pi,\pi')$ satisfies hypothesis \ref{weak}.
For an appropriate character $\xi'$ of $G'(\RR)$,
$\pi'\otimes \xi'$ is the limit, in the Fell topology of unitary representations of $G'(\RR)/A^0$, of
$L^2$ automorphic representations of $\Gamma'\cdot A^0\backslash G'(\RR)$ as $\Gamma'$ varies over
congruence subgroups of $G'(\RR)$.
\end{thm}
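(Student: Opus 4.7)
The plan is to observe that the conclusion is essentially a direct invocation of Theorem 1.1(a) of \cite{BS}, so the work consists in verifying that Hypothesis \ref{weak} supplies the input required by the Burger--Sarnak machine, and then tracing the mechanism of that theorem to obtain the stated Fell-topology convergence.

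First I would feed in the automorphic realization of $\pi$. By hypothesis there is an equivariant embedding $\lambda : \pi \to C^{\infty}(\Gamma \backslash G(\RR))$ with $\lambda(\pi) \otimes \xi \subset L^2(\Gamma \cdot A^0 \backslash G(\RR))$. Restricting the $G(\RR)$-action on the target to $G'(\RR)$ and choosing an auxiliary character $\xi'$ of $G'(\RR)$ so that the relevant central characters match on $A^0$ (which is what the finite-index hypothesis on $A_G \subset A_{G'}$ provides), Hypothesis \ref{weak} together with the transitivity of weak containment immediately yields that $\pi' \otimes \xi'$ is weakly contained in $L^2(\Gamma \cdot A^0 \backslash G(\RR))$, viewed as a unitary $G'(\RR)/A^0$-representation.

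Next comes the geometric heart of the Burger--Sarnak argument: decompose the arithmetic quotient $\Gamma \cdot A^0 \backslash G(\RR)$ into $G'(\RR)$-orbits. Choosing representatives $g_i$ for the double cosets in $\Gamma \backslash G(\QQ)/G'(\QQ)$, the $G'(\RR)$-orbit through the class of $g_i$ is naturally identified with $\Gamma'_i \cdot A^0 \backslash G'(\RR)$, where $\Gamma'_i = g_i^{-1} \Gamma g_i \cap G'(\QQ)$ is a congruence subgroup of $G'(\QQ)$. The content of Theorem 1.1(a) of \cite{BS} is that, through this orbit picture together with the standard direct-integral formalism for restricting a unitary representation to a closed subgroup, every $G'(\RR)$-subrepresentation of $L^2(\Gamma \cdot A^0 \backslash G(\RR))$ is a Fell-topology limit of $L^2$-automorphic representations on the congruence quotients $\Gamma'_i \cdot A^0 \backslash G'(\RR)$. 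Combining this with the weak containment established in the previous paragraph produces the desired conclusion for $\pi' \otimes \xi'$.

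The delicate step, and the reason Burger and Sarnak's theorem is genuinely needed rather than a purely formal manipulation, is that individual $G'(\RR)$-orbits in $\Gamma \cdot A^0 \backslash G(\RR)$ are typically not closed. Dense orbits must be handled, and the orbit decomposition has to be interpreted in the Fell topology rather than as a literal Hilbert-space direct sum; this is precisely where the ergodic input of \cite{BS} enters. The main obstacle one would face in reconstructing the proof from scratch is exactly this measure-theoretic/ergodic step, so the right strategy is to cite \cite{BS} directly, once the transitivity reduction above has been carried out.
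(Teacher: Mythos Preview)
Your proposal is correct in spirit, and indeed more than the paper itself offers: the paper does not prove this theorem at all but simply states it as a direct citation of Theorem~1.1(a) of \cite{BS}. The sentence immediately preceding the theorem says exactly this, and there is no proof environment following the statement.

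Your sketch of the underlying Burger--Sarnak mechanism (transitivity of weak containment, orbit decomposition of $\Gamma\cdot A^0\backslash G(\RR)$ under $G'(\RR)$, and the ergodic input handling non-closed orbits) is a reasonable outline of how the cited result is proved, but none of this appears in the present paper. If you are writing up this theorem for the paper, the appropriate treatment is simply to attribute it to \cite{BS} and move on, as the author does.
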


If $\alpha$ is a Hecke character of $Z_G$, we let $L^2_{cusp,\alpha}(G(\QQ)\backslash G(\ad))$
denote the space of complex-valued functions $f$ on $G(\QQ)\backslash G(\ad)$ with central character $\alpha$ such that,
for an appropriate Hecke character $\xi$ of $G(\ad)$, $f\cdot \xi$ is trivial on an open subgroup $A_0 \subset A_G(\RR)$ and
defines a square integrable function on $G(\QQ)\cdot A_0 \backslash G(\ad)$.
By a {\it cuspidal automorphic representation} of $G$ we will mean alternatively an irreducible
Hilbert space component of 
$L^2_{cusp,\alpha}(G(\QQ)\backslash G(\ad))$ for some $\alpha$ or the corresponding
$(Lie(G)_{\CC},K_{\infty}) \times G(\af))$-module, for some maximal compact subgroup $K_{\infty} \subset G(\RR)$.
The first part of the following corollary follows by standard arguments.  The second part
is then obvious, and the third part is a consequence of the well-known fact that integrable
discrete series representations are isolated in the Fell topology (or of the Poincar\'e series argument in
\cite{H90}, \S 7).   Note that $\pi$ and $\pi'$ now
denote global automorphic representations.

\begin{cor}\label{BSadelic}  Let $\pi$ be a cuspidal automorphic representation of $G$ whose archimedean
component $\pi_{\infty}$ is a discrete series representation of $G(\RR)$.   Let $\pi'_{\infty}$ by a discrete series
representation of $G'(\RR)$, and assume the pair $(\pi_{\infty},\pi'_{\infty})$ satisfies hypothesis \ref{weak}.
\begin{itemize}
\item[(a)]  For an appropriate Hecke character $\xi'$ of $G'(\ad)$,
$\pi'_{\infty}\otimes \xi'_{\infty}$ is the limit, in the Fell topology of unitary representations of $G'(\RR)/A^0$, of
archimedean components of $L^2$ automorphic representations of $G'(\ad)/A^0$.

\item[(b)]  In particular, if $\pi'_{\infty}$ is isolated in the automorphic spectrum of $G'$, then there is an $L^2$-automorphic
representation $\sigma$ of $G'$ with archimedean component $\pi^{\prime,\vee}_{\infty}$ and automorphic
forms $f \in \pi$ and $f' \in \pi'$ such that
$$\int_{G'(\QQ)\cdot A^0\backslash G'(\ad)} f(g')f'(g') dg' \neq 0.$$

\item[(c)]  If $\pi'_{\infty}$ belongs to the integrable discrete series, then there exists $\sigma$ as in (b).
\end{itemize}

\end{cor}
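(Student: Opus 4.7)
The plan is to prove the three parts in sequence: (a) follows from Theorem \ref{BS} by adelization, (b) is an immediate consequence of (a) once one has isolation, and (c) reduces to (b) via the isolation of integrable discrete series. For part (a), I would first descend $\pi$ to the real setting: choose an open compact subgroup $K^f \subset G(\af)$ fixing a non-zero vector of the finite part of $\pi$, use strong approximation to write $G(\ad) = \bigsqcup_i G(\QQ) \cdot g_i G(\RR) \cdot K^f$ with $g_i \in G(\af)$ ranging over a finite set, and read off from one of these double cosets a congruence subgroup $\Gamma \subset G(\QQ)$ and an equivariant embedding $\lambda: \pi_\infty \hookrightarrow L^2(\Gamma \cdot A^0 \backslash G(\RR))$, after twisting by the character $\xi$ needed to ensure square-integrability modulo $A^0$. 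Theorem \ref{BS} applied to $\lambda$ then provides a Fell-topological approximation of $\pi'_\infty \otimes \xi'_\infty$ by representations occurring in $L^2(\Gamma'_n \cdot A^0 \backslash G'(\RR))$ for a sequence of congruence subgroups $\Gamma'_n \subset G'(\QQ)$. Running strong approximation for $G'$ in the reverse direction identifies each such $L^2$ space with a summand of $L^2(G'(\QQ) \cdot A^0 \backslash G'(\ad))$, so the Fell limit takes place inside the set of archimedean components of $L^2$ automorphic representations of $G'$, as asserted in (a).

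For part (b), the isolation hypothesis means that $\pi'_\infty \otimes \xi'_\infty$ is an isolated point of the subset of the unitary dual of $G'(\RR)/A^0$ consisting of archimedean components of automorphic representations; the Fell limit of part (a) is then attained, producing an adelic $L^2$ automorphic representation $\sigma$ whose archimedean component is $\pi^{\prime,\vee}_\infty$ up to the appropriate twist. To obtain non-vanishing of a global period, I would combine this existence with the invariant pairing of Hypothesis \ref{weak}. Concretely, cuspidality of $\pi$ ensures rapid decay, so the period integral $\int_{G'(\QQ) \cdot A^0 \backslash G'(\ad)} f(g') f'(g') \, dg'$ converges absolutely for $f \in \pi$ and $f' \in \sigma$; choosing $f$ and $f'$ whose archimedean components realize the non-trivial Hilbert space pairing of Hypothesis \ref{weak}, and arranging the finite-place vectors so that the finite-place part of the period does not vanish, yields a non-zero value.

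For part (c), I would invoke the standard fact that integrable discrete series representations are isolated in the Fell topology of the unitary dual of $G'(\RR)/A^0$, which immediately fulfils the hypothesis of (b). As noted in the text, an alternative is to bypass (a)--(b) entirely and use the Poincar\'e series construction of \cite{H90}, \S 7, which directly realizes an integrable discrete series as the archimedean component of a cuspidal automorphic representation that pairs non-trivially with $\pi$. The main point requiring care throughout is the bookkeeping surrounding the characters $\xi, \xi'$ and the subgroup $A^0 \subset A_G(\RR)$, which must be matched consistently when moving between the real analytic and adelic settings; this is a standard normalization issue rather than a substantive difficulty.
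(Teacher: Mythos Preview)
Your treatment of (a) and (c) matches the paper's: adelize Burger--Sarnak for (a), and for (c) use that integrable discrete series are isolated in the Fell topology (hence in the automorphic spectrum), reducing to (b).

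There is, however, a genuine gap in your argument for (b). Having extracted $\sigma$ from the isolation hypothesis, you attempt to force $\int f f' \neq 0$ by ``choosing $f$ and $f'$ whose archimedean components realize the non-trivial Hilbert space pairing of Hypothesis~\ref{weak}, and arranging the finite-place vectors so that the finite-place part of the period does not vanish.'' But the global period integral does not factor as a product of local pairings; there is no ``finite-place part'' to arrange independently of the archimedean part. Knowing that the local Hom space at infinity is non-zero, and that $\sigma$ is automorphic, does not imply that the particular global linear form given by integration over $G'(\QQ)A^0\backslash G'(\ad)$ is non-zero on $\pi\otimes\sigma$.

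What makes (b) ``obvious'' in the paper's sense is that the Burger--Sarnak mechanism behind (a) is not an abstract Fell-approximation statement detached from $\pi$: the automorphic representations of $G'$ whose archimedean components approximate $\pi'_\infty$ arise concretely in the spectral decomposition of the restrictions $\{f|_{G'(\ad)} : f \in \pi\}$ inside $L^2(G'(\QQ)A^0\backslash G'(\ad))$. When $\pi'_\infty$ is isolated in that spectrum, the orthogonal projection of some $f|_{G'(\ad)}$ onto the $\pi'_\infty$-isotypic part is non-zero; taking $f'$ to be (the complex conjugate of) that projection lands you in an automorphic $\sigma$ with $\sigma_\infty \cong \pi^{\prime,\vee}_\infty$ and gives $\int f f' \neq 0$ tautologically, as an $L^2$ inner product. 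In short, $\sigma$ is not found first and then paired with $\pi$; it is produced \emph{from} $\pi$, and the non-vanishing of the period is built into its construction. Your two-step approach loses exactly this link.
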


\section{Restrictions of minimal types}

In what follows, $G$ is a connected reductive group over $\QQ$ such that $G(\RR)$ has a discrete series.
Our goal in this section is to state a simple algebraic condition on $\pi$ as in the statement of Hypothesis
\ref{weak} that guarantees that the condition of the hypothesis is verified.  We are not striving for
maximum generality, and the condition is certainly unnecessarily strong.

Choose maximal compact connected subgroups $K^*_{\infty} \subset G(\RR)$ and $K^{\prime,*}_{\infty} \subset G'(\RR)$, with
$K^{\prime,*}_{\infty} \subset K^*_{\infty}$.  Define $K_{\infty} = K^*_{\infty}\cdot Z_G(\RR)$, $K'_{\infty} = K^{\prime,*}_{\infty}\cdot Z_{G'}(\RR)$.
Let $\pi$ be a discrete series representation of $G(\RR)$.  It has a minimal, or Blattner $K_{\infty}$-type $\tau$, an irreducible
representation $\tau$ of $K_{\infty}$ with the property that $\tau$ occurs in $\pi$ with multiplicity $1$, and is minimal
in the sense that its highest weight is shortest among all $K_{\infty}$-types occurring in $\pi$.  

In the next section
we recall the formula for the highest weight of $\tau$ in terms of the Harish-Chandra parameter of $\pi$.
On the other hand, given an irreducible representation $\tau$ of $K_{\infty}$, the article \cite{SR} provides a 
necessary and sufficient criterion for $\tau$ to be the minimal $K_{\infty}$-type of a discrete series
representation (and more generally, of a cohomologically induced representation of the form $A_{\mathfrak{q}}(\lambda)$).
Choose a maximal torus $H \subset K_{\infty}$.  
Since $G(\RR)$ has discrete series, $H$ is also a maximal torus of $G$.  We use lower case gothic letters to denote (complexified
real) Lie algebras; thus $\mathfrak{k} = Lie(K_{\infty})_{\CC}$, $\mathfrak{h} = Lie(H(\CC))$, and so on.  Denote by $\Delta(\fk,\fh)$ (resp. $\Delta(\fg,\fh)$)the set of
roots of $\fh$ in $\fk$ (resp. $\fg$) and let $(\bullet,\bullet)$ be the pairing on $\fh^*$
induced by any non-degenerate $G(\RR)$-invariant pairing on $\fg$.  

Let $\mu$ be the highest weight of an irreducible representation $\tau$ of $K_{\infty}$, relative to a system
of positive roots of $\fh$.  Define
\begin{equation}  \mu' = \mu + \sum_{\alpha \in \Delta(\fk,\fh), (\alpha,\mu) > 0} \alpha. \end{equation}
The $\theta$-stable parabolic defined by $\mu'$ is the parabolic subalgebra $\fq(\mu')$ spanned by
$\fh$ and the root spaces $\fg_{\alpha}$ for $(\alpha,\mu') \geq 0$.  The following observation is clear

\begin{obs}\label{Borel} For $\mu$ sufficiently regular, $\fq(\mu')$ is a Borel subalgebra of $\fg$.
\end{obs}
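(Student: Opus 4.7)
The plan is to reduce the assertion to two elementary facts about regularity. First, for any weight $\lambda \in \fh^*$, the subalgebra of $\fg$ spanned by $\fh$ together with the root spaces $\fg_\alpha$ for $(\alpha,\lambda) \geq 0$ is a Borel subalgebra if and only if $\lambda$ is $\fg$-regular, i.e.\ $(\alpha,\lambda) \neq 0$ for every $\alpha \in \Delta(\fg,\fh)$. This is immediate: when $\lambda$ is regular, precisely one of $\pm \alpha$ satisfies the inequality strictly for each root $\alpha$, the set $\{\alpha : (\alpha,\lambda) > 0\}$ is a positive system (closed under sums), and the dimension of the subalgebra equals $\dim \fh + |\Delta(\fg,\fh)|/2$, which is the dimension of a Borel; $\theta$-stability is automatic since $\fh \subset \fk$.

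Second, I would observe that since $\mu$ is the highest weight of an irreducible $K_\infty$-representation, it is dominant with respect to the chosen positive system of compact roots. As soon as $\mu$ avoids the compact root walls — a condition subsumed by being sufficiently regular in $\fh^*$ — the sum in the definition of $\mu'$ runs over every compact positive root, and
\begin{equation*}
\mu' = \mu + 2\rho_c, \qquad \rho_c := \tfrac{1}{2} \sum_{\alpha \in \Delta(\fk,\fh),\ (\alpha,\mu) > 0} \alpha.
\end{equation*}

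With these reductions in hand the remainder is a general-position argument. The locus on which $\mu + 2\rho_c$ fails to be $\fg$-regular is contained in the finite union of affine hyperplanes $\{\nu \in \fh^* : (\alpha,\nu) = -2(\alpha,\rho_c)\}$, one for each $\alpha \in \Delta(\fg,\fh)$. Any $\mu$ lying sufficiently deep in the compact dominant chamber — for instance any $\mu$ with $|(\alpha,\mu)| > 2|(\alpha,\rho_c)|$ for every $\alpha \in \Delta(\fg,\fh)$ — avoids all of them; this is exactly what "sufficiently regular" means here. Combined with the first step, $\fq(\mu')$ is then a Borel subalgebra. There is no real obstacle: the statement amounts to the fact that translation of a sufficiently regular dominant weight by the fixed vector $2\rho_c$ cannot land on any root hyperplane of $\fg$.
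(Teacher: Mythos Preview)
Your argument is correct. The paper does not actually prove this statement --- it is labelled an ``observation'' and declared clear --- so there is nothing to compare against directly. Your two-step reduction ($\fq(\mu')$ is Borel iff $\mu'$ is $\fg$-regular; the correction $\mu'-\mu$ is bounded so $\mu'$ inherits regularity from a sufficiently regular $\mu$) is precisely the reasoning the paper invokes a few lines later in the proof of Corollary~\ref{mintype}, where it notes that the finitely many possible correction terms lie in a bounded subset of $\fh^*$. Your intermediate identification $\mu' = \mu + 2\rho_c$ once $\mu$ is compact-regular is a harmless refinement; one could equally well skip it and observe directly that $\mu'-\mu$ ranges over the finite set of sums of subsets of $\Delta(\fk,\fh)$.
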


Denote by $\fu(\mu')$ the unipotent radical of $\fq(\mu')$, and let  $2\rho(\fu(\mu')$ denote the sum of
the roots of $\fh$ occurring in $\fu(\mu')$.
The following proposition is a special case of a result of Salamanca-Riba.
\begin{prop}\label{SR} ((\cite{SR}, Proposition 4.1))  Define
$$\Delta(\fu(\mu'),\fh) = \{\alpha \in \Delta(\fg,\fh) ~|~ (\alpha,\mu') > 0 \}.$$
Then $\tau$ is the lowest $K_{\infty}$-type of a discrete series representation $\pi$ of $G(\RR)$ if and only if 
\begin{itemize}
\item[(a)] $\fq(\mu')$ is a
Borel subalgebra of $\fg$;
\item[(b)] 
$$(\mu',\alpha) \geq (2\rho(\fu(\mu')),\alpha)$$
for all $\alpha \in \Delta(\fu(\mu'),\fh)$ 
\end{itemize}
In that case, the Harish-Chandra parameter of $\pi$
is equal to $\mu' - 2\rho(\fu(\mu')).$
\end{prop}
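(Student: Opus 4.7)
My plan is to derive the proposition as the specialization of Salamanca-Riba's full classification (\cite{SR}, Proposition 4.1) to the case where the $\theta$-stable parabolic produced by her construction is a Borel subalgebra, so that the corresponding $A_{\fq}(\lambda)$-module is a discrete series. The proof then has three essentially independent ingredients.

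The first step is to recall the classical fact that, for a $\theta$-stable parabolic $\fq \subset \fg$ whose Levi is the compact Cartan $\fh$, the cohomologically induced module $A_{\fq}(\lambda)$ is a (nonzero) discrete series representation precisely when $\fq$ is a Borel subalgebra, and conversely every discrete series of $G(\RR)$ arises in this way. This reduces the problem of describing the minimal $K_\infty$-types of discrete series to describing the minimal $K_\infty$-types of those $A_{\fq(\mu')}(\lambda)$-modules for which $\fq(\mu')$ happens to be a Borel, which is exactly condition (a); Observation \ref{Borel} then ensures that this is automatic for $\mu$ sufficiently regular.

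The second step is to invoke Salamanca-Riba's Proposition 4.1 directly. Given $\tau$ of highest weight $\mu$, she asserts that $\tau$ is the minimal $K_\infty$-type of some nonzero $A_{\fq}(\lambda)$ if and only if a specific positivity condition on $\mu'$ is satisfied, and in that case she reconstructs the pair $(\fq,\lambda)$ canonically from $\mu$ as $(\fq(\mu'),\, \mu'-2\rho(\fu(\mu')))$. In the Borel case her positivity condition reduces verbatim to the inequality $(\mu',\alpha) \geq (2\rho(\fu(\mu')),\alpha)$ for all $\alpha \in \Delta(\fu(\mu'),\fh)$. Combined with the first step, this yields both directions of the equivalence claimed in the proposition.

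The third step is to extract the explicit formula for the Harish-Chandra parameter. When $\fq(\mu')$ is a Borel, $\fu(\mu')$ is spanned by all positive root spaces for the system $\Delta(\fu(\mu'),\fh)$ picked out by $\mu'$, so $2\rho(\fu(\mu'))$ coincides with $2\rho$ for that system. Matching Salamanca-Riba's normalization (in which $A_{\fq}(\lambda)$ has infinitesimal character $\lambda + \rho$) against the Blattner formula $\mu = \lambda + \rho_n - \rho_c$ produces the displayed identity $\mu' - 2\rho(\fu(\mu')) = \lambda$. The main obstacle is precisely this bookkeeping: one must align the positive systems for the compact roots implicit in the choice of the highest weight $\mu$ with the positive systems for all of $\Delta(\fg,\fh)$ implicit in the Harish-Chandra parameter, and track $\rho$-shifts carefully across Salamanca-Riba's parametrization of $A_{\fq}(\lambda)$. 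Once this dictionary is in place, Proposition \ref{SR} is an immediate specialization of \cite{SR}.
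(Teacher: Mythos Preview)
Your proposal is correct and is precisely in the spirit of the paper's treatment: the paper does not give an independent proof of this proposition at all, but simply introduces it with the phrase ``The following proposition is a special case of a result of Salamanca-Riba'' and cites \cite{SR}, Proposition~4.1. Your three-step outline (identify discrete series among the $A_{\fq}(\lambda)$ as those with $\fq$ a Borel, invoke Salamanca-Riba's classification verbatim, and then unwind the $\rho$-shifts to read off the Harish-Chandra parameter) is exactly the specialization the paper has in mind, spelled out in more detail than the paper itself provides.
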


When $\fq(\mu')$ is not a Borel subalgebra, Salamanca-Riba determines the parameters $\lambda$ for which $\tau$ is the lowest $K_{\infty}$-type
of a module $A_{\fq(\mu')}(\lambda)$, under a condition analogous to (b).  We let $G' \subset G$ be a reductive subgroup
as in the previous section, and we assume $G'(\RR)$ has a non-trivial discrete series.  In the next few propositions, a character $\lambda$ of $\fh$ is ``sufficiently regular" if
$|(\lambda,\alpha)| > N(G)$ for all roots $\alpha$, where $N(G)$ is a positive constant, depending only on $G(\RR)$, that can be made explicit.

\begin{cor}\label{mintype}  Let $\tau$ be an irreducible representation of $K_{\infty}$ with highest weight $\mu$.  For $\mu$ sufficiently regular, 
$\tau$ is the lowest $K_{\infty}$-type of a discrete series representation $\pi$ of $G(\RR)$.  Moreover, for $\mu$ sufficiently
regular, the restriction of $\tau$ to $K'_{\infty}$ contains an irreducible representation $\tau'$ that is the lowest $K'_{\infty}$-type
of a discrete series representation $\pi'$ of $G'(\RR)$.  
\end{cor}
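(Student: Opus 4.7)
The first assertion is immediate from Observation \ref{Borel} combined with Proposition \ref{SR}. Condition (a) of the proposition holds for $\mu$ sufficiently regular by Observation \ref{Borel}. For condition (b), once $\fq(\mu')$ is a Borel, $2\rho(\fu(\mu'))$ is simply the sum of the positive roots of $\Delta(\fg,\fh)$ in the chamber selected by $\mu'$, while by definition $\mu' - \mu$ is the fixed sum of the compact positive roots; thus $\mu' - 2\rho(\fu(\mu'))$ differs from $\mu$ by a bounded quantity and lies in the closed positive Weyl chamber of $\fg$ as soon as $\mu$ is sufficiently regular, giving (b).

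For the second assertion, choose compatible compact Cartans as follows. Let $H' \subset K'_\infty$ be a maximal torus, which is a compact Cartan of $G'$ since $G'(\RR)$ has a discrete series, and extend to a maximal torus $H$ of $K_\infty$ with $H' \subset H$. Choose a regular element $v$ in the real part of $i\fh^*$ whose restriction $v|_{\fh'}$ is simultaneously regular for all four root systems $\Delta(\fg,\fh)$, $\Delta(\fk,\fh)$, $\Delta(\fg',\fh')$, $\Delta(\fk',\fh')$, and use $v$ and $v|_{\fh'}$ to define compatible positive systems. Take $\mu$ sufficiently regular with respect to $\Delta(\fg,\fh)$ in the $v$-chamber. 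Any $H$-weight $\nu$ of $\tau$ satisfies $\mu - \nu = \sum_{\alpha \in \Delta^+(\fk,\fh)} n_\alpha \alpha$ with $n_\alpha \geq 0$, so
$$\mu|_{\fh'} - \nu|_{\fh'} \;=\; \sum_\alpha n_\alpha\, \alpha|_{\fh'}$$
pairs non-negatively with $v|_{\fh'}$. Hence $\mu|_{\fh'}$ is $v|_{\fh'}$-maximal among the $H'$-weights of $\tau|_{K'_\infty}$; since it occurs with multiplicity at least one (contributed by the $\mu$-eigenvector of $\tau$), it must be the highest weight of some irreducible $K'_\infty$-component $\tau'$ of $\tau|_{K'_\infty}$.

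For $\mu$ sufficiently regular, $\mu|_{\fh'}$ is automatically $\fk'$-dominant and as regular as we wish with respect to $\Delta(\fg',\fh')$, since the restriction map $\fh^* \to \fh'^*$ is linear and surjective, and $v|_{\fh'}$ is regular by construction. Applying the first assertion of the corollary, with $(G', K'_\infty, H')$ in place of $(G, K_\infty, H)$, shows that $\tau'$ is the lowest $K'_\infty$-type of a discrete series representation $\pi'$ of $G'(\RR)$, as desired.

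The principal delicate point of the argument is securing the simultaneous regularity of $v$ and $v|_{\fh'}$ with respect to all four root systems, which is what underlies the compatibility of positive systems making the $v$-order on $H$-weights transfer cleanly to the $v|_{\fh'}$-order on $H'$-weights. Once this geometric setup is in place, the existence of an irreducible component $\tau' \subset \tau|_{K'_\infty}$ with a sufficiently regular highest weight follows automatically, and both applications of Proposition \ref{SR} (to $G$ and to $G'$) proceed in parallel.
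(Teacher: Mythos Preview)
Your argument is correct and follows the same route as the paper: bound $2\rho(\fu(\mu'))$ uniformly to obtain the first assertion, then observe that the highest weight of $\tau$ appears as an extreme weight of some irreducible constituent $\tau'$ of $\tau|_{K'_\infty}$ and apply the first assertion to $G'$. The paper compresses this into two sentences (``the highest weight of $\tau$ is an extreme weight of a constituent of its restriction to $K'_\infty$''); you have simply spelled out the choice of compatible Cartans $H' \subset H$ and positive systems that makes the passage from $\mu$ to $\mu|_{\fh'}$ precise.
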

\begin{proof}  The set of subsets of  $\Delta(\fg,\fh)$ is finite, so the term $2\rho(\fu(\mu'))$ that appears in (b) of \cite{SR} belongs to a
bounded subset of the vector space $Hom(\fh,\RR)$.  This implies the first part of the Corollary, and the second part follows by applying the first part to $G'$, bearing in mind
that the highest weight of $\tau$ is an extreme weight of a constituent of its restriction to $K'_{\infty}$.
\end{proof}

\begin{prop}\label{FlJ}  Let $\pi$ be an irreducible discrete series representation of $G(\RR)$ with Harish-Chandra parameter $\lambda_{\pi}$.
Suppose $\lambda_{\pi}$ is sufficiently regular.  Then there is a discrete series representation $\pi'$ of $G'(\RR)$ weakly
contained in the restriction of $\pi$ to $G'(\RR)$.  Moreover (possibly under a more stringent regularity condition) we can assume
$\pi'$ to be in the integrable discrete series.
\end{prop}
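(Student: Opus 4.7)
The plan is to produce $\pi'$ by invoking Corollary \ref{mintype}, and then to verify Hypothesis \ref{weak} by an explicit analysis of the leading matrix coefficients of $\pi$ and $\pi'$ via Flensted-Jensen's formula, following the positivity argument of J.-S. Li used in \cite{HL}.

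The first step is to apply Corollary \ref{mintype}. Let $\tau$ be the minimal $K_{\infty}$-type of $\pi$, with highest weight $\mu$; sufficient regularity of $\lambda_{\pi}$ forces sufficient regularity of $\mu$ (via the formula $\mu = \lambda_{\pi} + 2\rho(\fu(\mu'))$ implicit in Proposition \ref{SR}), so Corollary \ref{mintype} supplies an irreducible summand $\tau'$ of $\tau|_{K'_{\infty}}$ that is the minimal $K'_{\infty}$-type of some discrete series representation $\pi'$ of $G'(\RR)$. The Harish-Chandra parameter $\lambda_{\pi'}$ can be read off from the highest weight of $\tau'$ via Proposition \ref{SR}.

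The second step is to check that $\pi'$ is weakly contained in $\pi|_{G'(\RR)}$. I would use Flensted-Jensen's formula to express the diagonal matrix coefficient $\Phi_{\pi}(g) = \langle \pi(g)v,v\rangle$, for $v$ a highest-weight vector of $\tau$, as an explicit integral over a compact set with a manifestly non-negative integrand. Projecting $\Phi_{\pi}|_{G'(\RR)}$ onto the $\tau'$-isotypic component and pairing with $\overline{\Phi_{\pi'}(g')}$, one integrates over $G'(\RR)/A^0$ to obtain an integral that is again manifestly positive, hence non-vanishing. Convergence is guaranteed because, under sufficient regularity of $\lambda_{\pi}$, the restriction $\Phi_{\pi}|_{G'(\RR)}$ decays rapidly on $G'(\RR)/A^0$ (in particular lies in $L^2$), so it can be paired against the $L^2$-function $\Phi_{\pi'}$. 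The resulting non-zero $G'(\RR)/A^0$-invariant linear form on $\pi\otimes \pi^{\prime,\vee}$ is precisely what Hypothesis \ref{weak} requires.

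For the final assertion, a discrete series representation is integrable precisely when its Harish-Chandra parameter satisfies a sharper regularity condition (the Hecht--Schmid criterion bounding the leading exponent). Because $\lambda_{\pi'}$ depends linearly on the highest weight $\mu$ of $\tau$, a sufficiently stronger regularity hypothesis on $\lambda_{\pi}$ translates into a stronger regularity hypothesis on $\lambda_{\pi'}$, forcing $\pi'$ into the integrable discrete series. I expect the main obstacle to be the positivity analysis in step two: one must verify that the Flensted-Jensen integrand genuinely yields a non-vanishing contribution after projection to $\tau'$. The compatibility of the extremal weights of $\tau$ and $\tau'$ furnished by the construction in Corollary \ref{mintype} is essential here, and this is the heart of Li's contribution in \cite{HL}.
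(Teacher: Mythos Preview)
Your proposal is correct and follows essentially the same argument as the paper, which likewise invokes Corollary \ref{mintype} to produce $\tau'$ and $\pi'$, then appeals to Li's positivity argument via the Flensted-Jensen formula (packaged as Proposition 1.2.3 of \cite{HL}) together with the Hecht--Schmid criterion for integrability. One small refinement: the paper checks that $\psi_{\pi}|_{G'(\RR)} \in L^{2+\epsilon}$ rather than $L^2$, and notes that this holds for any tempered representation of $G$ --- so regularity of $\lambda_{\pi}$ is needed only to apply Corollary \ref{mintype}, not for the convergence step.
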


\begin{proof}  The proof is based on the argument of Li used in \cite{L90, HL}.  Let $\pi'$ be the discrete series representation
of $G'(\RR)$ mentioned in the statement of \ref{mintype}, and let $\tau'$ be its minimal $K'_{\infty}$-type.  Let
$\psi_{\pi}$ be the matrix coefficient of $\pi$ 
occurring in its
minimal $K$-type $\tau$, defined by formula (1.2.6) of \cite{HL}.  Proposition 1.2.3 of \cite{HL}, based on the calculations in
\S 4 of \cite{L90}, asserts that $\pi'$ is weakly contained in the restriction of $\pi$ provided three conditions are satisfied.
First, the restriction of $\psi_{\pi}$ to $G'$ has to be in $L^{2 + \epsilon}(H)$; the argument, valid for any tempered
representation of $G$, is reviewed in the course of the proof of Proposition 2.3.4 of \cite{HL}.  Next, $\psi_{\pi}$ has to satisfy the
Flensted-Jensen formula of \cite{FJ}; but this is true for all discrete series representations.  The final condition is that $\tau'$
be contained in the restriction to $K'_{\infty}$ of $\tau$; this is our initial hypothesis.

The final claim follows from the characterization of integrable discrete series by Hecht and Schmid \cite{HS}:  any discrete
series with sufficiently regular Harish-Chandra parameter is integrable.

\end{proof}

We return to the global situation:  $G \supset G'$ are connected reductive groups
over $\QQ$, and both $G(\RR)$ and $G'(\RR)$ are assumed to have discrete series.
Combining \ref{FlJ} with \ref{BSadelic}, we draw the following global conclusion.  

\begin{prop}\label{pairing}  Let $\pi$ be a cuspidal automorphic representation of $G(\ad)$, with archimedean component $\pi_{\infty}$.  Suppose the Harish-Chandra
parameter of $\pi_{\infty}$ is sufficiently regular.  Then there is an integrable discrete series representation $\pi'_{\infty}$ of $G'(\RR)$, a cuspidal automorphic
representation $\pi'$ of $G'$ with archimedean component $\pi^{\prime}_{\infty}$ and automorphic
forms $f \in \pi$ and $f' \in \pi'$ such that
$$\int_{G'(\QQ)\cdot A^0\backslash G'(\ad)} f(g')f'(g') dg' \neq 0.$$
Moreover, $f$ and $f'$ can be taken in the minimal $K$- (resp. $K'$-) types of the archimedean components of their respective representations.
\end{prop}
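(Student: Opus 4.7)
The plan is to combine the archimedean weak-containment statement of Proposition~\ref{FlJ} with the global spectral conclusion of Corollary~\ref{BSadelic}(c). Once those two results are in hand, the argument is essentially formal, with the exception of the minimal-$K$-type refinement at the end.

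Concretely, I would first apply Proposition~\ref{FlJ}: the sufficient regularity of the Harish-Chandra parameter of $\pi_{\infty}$ produces an integrable discrete series representation $\pi'_{\infty}$ of $G'(\RR)$ weakly contained in $\pi_{\infty}|_{G'(\RR)}$, so Hypothesis~\ref{weak} holds for the pair $(\pi_{\infty},\pi'_{\infty})$. I would then feed $(\pi,\pi'_{\infty})$ into Corollary~\ref{BSadelic}(c): cuspidality of $\pi$ places it in the $L^{2}$-automorphic spectrum, and integrability of $\pi'_{\infty}$ is precisely the hypothesis of part~(c). The corollary yields an $L^{2}$-automorphic representation $\sigma$ of $G'(\ad)$ with archimedean component $\pi^{\prime,\vee}_{\infty}$, together with $f\in\pi$ and $f'\in\sigma$ whose period integral over $G'(\QQ)\cdot A^{0}\backslash G'(\ad)$ is nonzero. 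Replacing $\pi'_{\infty}$ by its contragredient at the outset if necessary (integrability and the discrete-series property are preserved), one obtains $\pi'$ whose archimedean component is exactly $\pi'_{\infty}$. Cuspidality of $\pi'$ comes for free: an irreducible $L^{2}$-automorphic representation with integrable discrete series at infinity cannot lie in the residual or continuous spectrum.

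The substantive step, and the one I expect to require real care, is the ``moreover'' assertion that $f$ and $f'$ may be chosen in the respective minimal $K_{\infty}$- and $K'_{\infty}$-types. The proof of Proposition~\ref{FlJ} is not abstract: the local pairing realising the weak containment is built explicitly from the minimal-$K_{\infty}$-type matrix coefficient $\psi_{\pi}$ of \cite{HL}, and the Salamanca-Riba/Flensted-Jensen mechanism selects $\pi'_{\infty}$ precisely through the sub-$K'_{\infty}$-type $\tau'\subset\tau|_{K'_{\infty}}$ of Corollary~\ref{mintype}, which is by construction the minimal $K'_{\infty}$-type of $\pi'_{\infty}$. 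Thus the local pairing is already nontrivial on $\tau\otimes\tau^{\prime,\vee}$. To carry this refinement through to the global period I would bypass the abstract Fell-topology limit in favour of the more explicit Poincar\'e series construction of \cite{H90}, \S7 (mentioned as an alternative in the discussion preceding Corollary~\ref{BSadelic}): starting from a minimal-$K'_{\infty}$-type matrix coefficient of the integrable $\pi'_{\infty}$, the Poincar\'e series is automatically an automorphic form of minimal $K'_{\infty}$-type, and its pairing against a minimal-$K_{\infty}$-type $f\in\pi$ unfolds to the restriction of $\psi_{\pi}$ to $G'(\RR)$ integrated against the $\pi'_{\infty}$-matrix coefficient, which is nonzero by the construction in Proposition~\ref{FlJ}. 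The main obstacle is precisely this bookkeeping with $K$-types during the unfolding; the existence of \emph{some} non-vanishing pairing is the formal part, but ensuring it survives the restriction to minimal types is where the explicit Poincar\'e series argument is needed.
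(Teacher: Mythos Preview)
Your approach is the paper's: combine Proposition~\ref{FlJ} with Corollary~\ref{BSadelic}(c), absorb the contragredient into the notation, and then argue cuspidality. The one place the paper adds content is the cuspidality step---it invokes Wallach's theorem \cite{Wa} that residual square-integrable representations are non-tempered at every place, which contradicts the temperedness of the discrete series $\pi'_{\infty}$; your ``comes for free'' is exactly this, so you should name the input. Conversely, the paper's written proof does not address the minimal-$K$-type refinement at all, leaving it implicit in the construction of Proposition~\ref{FlJ}; your Poincar\'e series sketch via \cite{H90}, \S7 is more than the paper supplies and is the right way to make that claim honest.
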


\begin{proof}  We are changing notation:  the representation $\pi'$ here is $\sigma^{\vee}$ in
\ref{BSadelic}, and we have $\pi'_{\infty}$ pairing with rather than weakly contained in $\pi_{\infty}$.
Since \ref{BSadelic} only claims that $\sigma$ is an $L^2$-automorphic representation, we need to show that in fact $\pi'$ is
cuspidal.  If not, then $\pi'$ is a residual square-integrable representation.  But since $\pi'_{\infty}$
is tempered, this contradicts Wallach's theorem \cite{Wa} that residual representations are non-tempered at all places.
\end{proof}

If we assume that automorphic representations with discrete series components at infinity are isolated in the automorphic spectrum, then
we can relax the regularity condition in the last proposition, using the Burger-Sarnak theorem.

\section{Coherent cohomology of unitary group Shimura varieties}

\subsection*{Preliminary notation}

Let $F^+$ be a totally  real field, $F$ a totally imaginary quadratic
extension of $F^+$.  
The quadratic Hecke character of $F^+$ attached to the extension $F$ is denoted
$\varepsilon_F$.
 Let $W$ be an $n$-dimensional $F$-vector space, endowed with a non-degenerate
hermitian form $<\bullet,\bullet>_W$, relative to the extension $F/F^+$.
We let $\Sigma^+$, resp. $\Sigma_F$, denote the set of complex embeddings
of $F^+$, resp. $F$, and choose a CM type $\Sigma \subset \Sigma_F$, i.e.
a subset which upon restriction to $F^+$ is identified with $\Sigma^+$.
Complex conjugation in $Gal(F/F^+)$ is denoted $c$.

The hermitian pairing $\langle \bullet,\bullet\rangle _W$ defines an involution $\tilc$
on the algebra $End(W)$ via
\begin{equation*}
\langle a(v),v'\rangle _W = \langle v,a^{\tilc}(v')\rangle _W, \tag{0.1}\end{equation*}%
and this involution extends to $End(W\otimes_{\QQ}R)$ for any $\QQ$-algebra $R$.
We define the
algebraic group $U(W) = U(W,\langle \bullet,\bullet\rangle_W)$ 
(resp $GU(W) = GU(W,\langle \bullet,\bullet\rangle_W)$)
over $F^+$ (resp. $\QQ$) such that, for any $F^+$-algebra $R$ (resp. $\QQ$-algebra $S$)
\begin{equation*}U(W)(R) = \{g \in GL(W\otimes_{F^+} R)~|~ g\cdot\tilc(g) = 1 \};  \end{equation*}
\begin{equation*}GU(W)(S) = \{g \in GL(W\otimes R)~|~ g\cdot\tilc(g) = \nu(g) \text{ for some } \nu(g) \in R^{\times}\};  \end{equation*}

The quadratic Hecke character
of $\ad_{F^+}^{\times}$ corresponding to the extension $F/F^+$ is denoted
\begin{equation*}\eta_{F/F^+}:  \ad_{F^+}^{\times}/F^{+,\times}N_{F/F^+} \ad_{F}^{\times} \isoarrow \pm 1. \end{equation*}%

If $W$ is a $1$-dimensional hermitian or skew-hermitian
space then we write $U(W) = U(1)$.  The $F^+$-rational points of $U(1)$ are
given by the group of elements $x \in F^{\times}$ with $N_{F/F^+}(x) = 1$.

Representations of groups over archimedean local fields are assumed to be
admissible smooth Fr\'echet representations of moderate growth, in the sense of Casselman and Wallach (cf. \cite{C}),
and linear forms on such representations are assumed to be continuous with respect to the Fr\'echet topology.
The relation between such linear forms and linear forms on the corresponding Harish-Chandra modules
is discussed in the appropriate place.  The Harish-Chandra $(Lie(G),K_{\infty})$-module attached to a smooth Fr\'echet representation 
$\Pi$ of the group $G$ (and a choice of $K_{\infty}$) is denoted $\Pi^{(0)}$.  Similarly, if $\Pi$ is a representation
of an adelic group $G(\ad)$ of the form $\Pi_{\infty}\otimes \Pi_f$, where $\Pi_f$ is an admissible representation
of $G(\af)$ and $\Pi_{\infty}$ is an admissible smooth Fr\'echet representation (of moderate growth) of the archimedean
part of $G(\ad)$, then we write $\Pi^{(0)} = \Pi_{\infty}^{(0)}\otimes \Pi_f$.

\subsection{Multiplicity one and Gross-Prasad periods}

Let $W$ be an $n$-dimensional hermitian space as above, and write $W$ as the orthogonal direct
sum $W = W' \oplus W_0$ over $F$, where $\dim W' = n-1$.  Let $G = GU(W)$ as above, and let
$G' = GU(W') \times GU(W_0) \cap G$; it is the subgroup of $(g',g_0) \in GU(W') \times GU(W_0)$ with
$\nu(g') = \nu(g_0)$.  In this section we apply the results of \S 2 to the inclusion $G' \subset G$.
The new feature in this special case is that when $\pi \otimes \pi'$ is a cuspidal automorphic representation
of $G \times G'$, the space of global pairings $\pi \otimes \pi' \ra \CC$ is of dimension at most $1$.  

Let $X$ and $X'$ be $G(\RR)$ and $G'(\RR)$-homogeneous hermitian symmetric domains, respectively,
so that $(G',X') \hookrightarrow (G,X)$ is a morphism of Shimura data.
We choose maximal connected compact (mod center) subgroups $K \subset G(\RR)$ and $K' \subset G'(\RR)\cap K$
as in \S 2.  Then $K = Stab_{G(\RR)}(x)$ for some $x \in X' \subset X$, and we assume $x$ to be a CM point (the fixed point in $X'$ of
a $\QQ$-rational torus).  We are thus able to apply the theory of \cite{BB}.  Suppose $\pi$ and $\pi'$ are cuspidal automorphic
representations of $G$ and $G'$ respectively.  As in \S 4.3 of \cite{BB}, there are number fields $E(\pi)$ and $E(\pi')$ (depending on $x$) over which
$\pi^{(0)}$ and $\pi^{\prime,(0)}$ have rational structures, denoted $\pi^{(0)}_{E(\pi)}$ and  $\pi^{\prime,(0)}_{E(\pi')}$.  

As in \cite{BB}, we let $H \subset G$ and $H' \subset G'$ denote the unitary subgroups, $\mathfrak{h}' = Lie(H')$.  We define
$$L_0(\pi,\pi') = Hom_{(\mathfrak{h}',K')\times H'(\af)}(\pi^{(0)}\otimes \pi^{\prime,(0)},\CC).$$
We assume the {\it automatic continuity hypothesis} (Conjecture 4.3.1 of \cite{BB}):  every element of $L_0(\pi,\pi')$ has a natural
continuous extension to the admissible smooth Fr\'echet representations of moderate growth associated to $\pi$ and $\pi'$.
It then follows from \cite{SZ} that $L_0(\pi,\pi')$ is
a space of dimension at most $1$ (cf. \cite{BB}, Corollary 4.3.2).  We also consider the variant
$$L_0^{mot}(\pi,\pi') = L^{mot}_0(\pi,\pi') = Hom_{(\frak{h}',K')\times H'(\af)}(\pi^{mot}\otimes \pi^{\prime,mot},E(\pi)\otimes E(\pi')\otimes \CC)
$$
where
$\pi^{mot} = \pi^{(0)}_{E(\pi)}\otimes_{\QQ}\CC$, $\pi^{\prime,mot} = \pi^{\prime,(0)}_{E(\pi'}\otimes_{\QQ}\CC.$
Corollary 4.3.8 of \cite{BB} asserts that $L_0^{mot}(\pi,\pi')$ is either $0$ or a free $E(\pi)\otimes E(\pi')\otimes \CC$-module
of rank $1$ (there is a misprint in {\it loc. cit.}), and it follows as in the beginning of \S 4.4 that
\begin{prop}  Assume $L_0^{mot}(\pi,\pi') \neq 0$.  Then the free rank $1$ $E(\pi)\otimes E(\pi')\otimes \CC$-module $L_0^{mot}(\pi,\pi')$ has a generator $I^{mot}(\pi,\pi')$
such that 
$$I^{mot}{(\pi,\pi')}( \pi^{(0)}_{E(\pi)}\otimes \pi^{\prime,(0)}_{E(\pi')}) = E(\pi)\otimes E(\pi').$$
\end{prop}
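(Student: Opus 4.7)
The plan is a Galois/field-of-definition descent. Introduce the natural $E(\pi)\otimes E(\pi')$-rational cousin of $L_0^{mot}$:
$$L_0^{mot,arith}(\pi,\pi') := Hom_{(E(\pi)\otimes E(\pi'))[(\fh',K')\times H'(\af)]}\bigl(\pi^{(0)}_{E(\pi)}\otimes \pi^{\prime,(0)}_{E(\pi')},\; E(\pi)\otimes E(\pi')\bigr),$$
an $E(\pi)\otimes E(\pi')$-module. Since $\pi^{mot} = \pi^{(0)}_{E(\pi)}\otimes_\QQ\CC$ and $\pi^{\prime,mot} = \pi^{\prime,(0)}_{E(\pi')}\otimes_\QQ\CC$, extension of scalars gives a canonical $(E(\pi)\otimes E(\pi')\otimes\CC)$-linear map
$$L_0^{mot,arith}(\pi,\pi')\otimes_\QQ\CC \lra L_0^{mot}(\pi,\pi').$$
First I would show this map is an isomorphism. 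Decomposing $E(\pi)\otimes E(\pi') = \prod_i F_i$ into field factors and $E(\pi)\otimes E(\pi')\otimes\CC = \prod_{(\sigma,\tau)} \CC_{\sigma,\tau}$ indexed by pairs of complex embeddings, the right-hand side splits as $\prod_{(\sigma,\tau)} L_0(\pi^\sigma,\pi^{\prime,\tau})$, while the left-hand side splits compatibly by $\text{Gal}(\Qbar/\QQ)$-orbits of $(\sigma,\tau)$. Each piece is a finite-dimensional descent question over the corresponding $F_i$, and the one-dimensionality of the $\CC$-fiber furnished by Corollary 4.3.8 of \cite{BB} forces the descent to be genuine.

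Granting this, $L_0^{mot,arith}(\pi,\pi')$ is a free $E(\pi)\otimes E(\pi')$-module of rank $1$. I then pick any nonzero generator and call it $I^{mot}(\pi,\pi')$; by construction its restriction to $\pi^{(0)}_{E(\pi)}\otimes \pi^{\prime,(0)}_{E(\pi')}$ already lands in $E(\pi)\otimes E(\pi')$. The image is an $E(\pi)\otimes E(\pi')$-submodule of $E(\pi)\otimes E(\pi')$, hence an ideal of the form $\prod_i J_i$ with each $J_i\in\{0,F_i\}$. If some $J_i = 0$, the corresponding $\text{Gal}(\Qbar/\QQ)$-orbit of components of $I^{mot}(\pi,\pi')$ would vanish identically on rational vectors and hence, by Zariski density of the rational structure in its complexification, on all of $\pi^\sigma\otimes\pi^{\prime,\tau}$ in that orbit; this would contradict the fact that these components generate the one-dimensional spaces $L_0(\pi^\sigma,\pi^{\prime,\tau})$. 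Hence every $J_i = F_i$ and the image equals $E(\pi)\otimes E(\pi')$ exactly.

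The main obstacle is the base-change/descent isomorphism at the outset: it is a statement about an infinite-dimensional Hom space and is not a formal flat-base-change assertion. What makes it tractable is precisely that after decomposing over the semisimple algebra $E(\pi)\otimes E(\pi')$ the problem reduces, fiberwise, to recognizing a one-dimensional complex space as the base change of a one-dimensional space over a number field, a finite-dimensional Galois descent over the separable $\QQ$-algebra $E(\pi)\otimes E(\pi')$. This is essentially the content of the ``standard arguments as in the beginning of \S 4.4 of \cite{BB}'' invoked in the statement, and once it is in hand the rescaling to achieve surjectivity onto $E(\pi)\otimes E(\pi')$ is formal.
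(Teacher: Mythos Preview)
Your approach is correct and is the descent argument implicit in the paper's reference to \S 4.4 of \cite{BB}; the paper itself gives no independent proof beyond that citation.

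One remark on the step you flag as the main obstacle. The cleanest way to justify the base-change isomorphism $L_0^{mot,arith}\otimes_\QQ\CC \cong L_0^{mot}$ is to pass to coinvariants rather than to invoke Galois descent. An invariant linear form on $W := \pi^{(0)}_{E(\pi)}\otimes \pi^{\prime,(0)}_{E(\pi')}$ is the same thing as a linear form on the coinvariant quotient of $W$ by the span of the elements $Xv$, $kv-v$, $gv-v$ for $X\in\mathfrak{h}'$, $k\in K'$, $g\in H'(\af)$. Coinvariants, being a cokernel, commute with arbitrary flat base change; hence this quotient tensored with $\CC$ over $\QQ$ computes the coinvariants of $\pi^{mot}\otimes\pi^{\prime,mot}$. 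The rank-one statement for $L_0^{mot}$ (Corollary 4.3.8 of \cite{BB}) then forces the coinvariant quotient of $W$ itself to be free of rank $1$ over $E(\pi)\otimes E(\pi')$, and dualizing a finitely generated module \emph{does} commute with base change. This sidesteps the awkwardness of Galois descent along the non-Galois extension $\CC/\Qbar$ and makes your first paragraph go through directly.

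Your surjectivity argument in the second paragraph is also correct, but simpler than you make it: once each $F_i$-component of $I^{mot}(\pi,\pi')$ is known to be a nonzero $F_i$-linear form, it is automatically surjective onto $F_i$, so the Galois-orbit and Zariski-density remarks are not needed.
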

(This corrects another misprint in {\it loc. cit.}.)

On the other hand, the integral (cf. \cite{BB}, (4.2.3))
\begin{equation} f \otimes f'  \mapsto  \int_{H'(F)\backslash H'(\ad)} f(h')f'(h') dh' \end{equation}
defines elements of 
$$L_0(\pi^{(0)}_{E(\pi)}\otimes_{E(\pi),\alpha}\CC,\pi^{\prime,(0)}_{E(\pi')}\otimes_{E(\pi'),\alpha}\CC)$$
as $\alpha$ varies over $Hom(E(\pi)\otimes E(\pi'),\CC)$, and thus 
an element $I^{can}(\pi,\pi') \in L_0^{mot}(\pi,\pi')$ (mislabelled $I^{mot}$ in (4.4.1) of \cite{BB}).   The
{\it Gross-Prasad period invariant} is the constant
$$P(\pi,\pi') \in (E(\pi)\otimes E(\pi')\otimes \CC),$$ 
well-defined up to a factor
in $(E(\pi)\otimes E(\pi'))^{\times}$,
such that
$$I^{can}(\pi,\pi') = P(\pi,\pi')\cdot I^{mot}(\pi,\pi').$$
Again, the definition of the Gross-Prasad period is conditional on the automatic continuity hypothesis.

In {\it loc. cit.} a conjecture is assumed that implies that either $P(\pi,\pi') = 0$ or $P(\pi,\pi') \in (E(\pi)\otimes E(\pi')\otimes \CC)^{\times}$.  
In order to avoid referring to this conjecture -- and to avoid creating new misprints -- we henceforward fix $\alpha \in Hom(E(\pi)\otimes E(\pi'),\CC)$
of $E(\pi)\cdot E(\pi')$ and consider $\pi^{(0)}_{E(\pi)}$ and  $\pi^{\prime,(0)}_{E(\pi')}$ as subspaces of the respective
complex automorphic representations.    Then we let $P_{\alpha}(\pi,\pi') \in \CC$ be the projection of $P(\pi,\pi')$ on the
$\alpha$-component; $P_{\alpha}(\pi,\pi') = 0$ if and only if $ \int_{H'(F)\backslash H'(\ad)} f(h')f'(h') dh' = 0$ for all
$f \in \pi$, $f' \in \pi'$.

We reformulate \ref{pairing} in terms of the Gross-Prasad periods.

\begin{prop}\label{GPpairing}  Let $\pi$ be a cuspidal automorphic representation of $G(\ad)$, with archimedean component $\pi_{\infty}$ in the discrete series.  Suppose the Harish-Chandra
parameter of $\pi_{\infty}$ is sufficiently regular.  Then there is an integrable discrete series representation $\pi'_{\infty}$ of $G'(\RR)$, a cuspidal automorphic
representation $\pi'$ of $G'$ with archimedean component $\pi'_{\infty}$ such that $I^{can}(\pi,\pi') \neq 0.$
Moreover, we can find $f \in \pi$ and $f' \in \pi'$  in the minimal $K$- (resp. $K'$-) types of the archimedean components of their respective representations
such that $I^{can}(\pi,\pi')(f,f') \neq 0.$
\end{prop}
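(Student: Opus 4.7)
The strategy is to use Proposition \ref{pairing} as a black box and reinterpret its conclusion in the language of the Gross-Prasad functional $I^{can}(\pi,\pi')$ introduced above; the proof is therefore essentially a translation, with the analytic work already done in Sections 1 and 2.

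First, I apply Proposition \ref{pairing} to the inclusion $G' \subset G$. The hypotheses are satisfied: both $G(\RR) = GU(W)(\RR)$ and $G'(\RR)$ have discrete series (since the underlying unitary groups do), $A_G$ is of finite index in $A_{G'}$ (both centers have maximal $\RR$-split tori arising from the similitude character, and the inclusion $G' \hookrightarrow G$ is compatible with $\nu$), and the Harish-Chandra parameter of $\pi_{\infty}$ is sufficiently regular by assumption. The output is an integrable discrete series $\pi'_{\infty}$ of $G'(\RR)$, a cuspidal automorphic representation $\pi'$ of $G'$ with archimedean component $\pi'_{\infty}$, and forms $f \in \pi$ and $f' \in \pi'$ lying in the minimal $K$- and $K'$-types of $\pi_{\infty}$ and $\pi'_{\infty}$ respectively, such that
\[
\int_{G'(\QQ)\cdot A^0\backslash G'(\ad)} f(g') f'(g') \, dg' \neq 0.
\]

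Second, I convert this similitude-group integral to the unitary-group integral over $H'(F)\backslash H'(\ad)$ that defines $I^{can}(\pi,\pi')$ via formula (4.2.3) of \cite{BB}. The similitude character fits into a short exact sequence $1 \to H' \to G' \to \Gm \to 1$, and the compatibility of central characters on the open subgroup $A^0 \subset A_G(\RR)$ built into Proposition \ref{pairing} implies that $f(g')f'(g')$ descends to a function on the compact $\Gm$-quotient of $G'(\QQ)A^0 H'(\ad)\backslash G'(\ad)$. A Fubini decomposition of the Haar measure then shows that the $G'$-integral equals a nonzero constant times
\[
\int_{H'(F)\backslash H'(\ad)} f(h') f'(h') \, dh',
\]
so the latter is also nonvanishing for some $f$ and $f'$ in the prescribed minimal $K$-types. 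By the definition of $I^{can}(\pi,\pi')$, after choosing an embedding $\alpha \in \mathrm{Hom}(E(\pi)\otimes E(\pi'),\CC)$ and passing to the $\alpha$-component as in the preceding discussion, this integral nonvanishing is precisely the statement $I^{can}(\pi,\pi')(f,f') \neq 0$.

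The principal subtlety lies in the conversion step: one must verify that the similitude quotient $G'/H' \cong \Gm$ does not obstruct passage to the $H'$-integral. This reduces to checking that the central characters of $\pi$ and $\pi'$ restricted to $Z_{G'}(\ad)$ are compatible along the $\Gm$-direction, which follows from the finite-index hypothesis $[A_{G'}:A_G] < \infty$ combined with the compatibility of central characters on $A^0$ that is necessary for the existence of the Hilbert-space pairing in Hypothesis \ref{weak}. Once this bookkeeping is carried out, the claim follows immediately from Proposition \ref{pairing} and the definitional framework of \cite{BB}.
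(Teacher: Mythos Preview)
Your proposal is correct and follows the same approach as the paper, which simply states that Proposition \ref{GPpairing} is a reformulation of Proposition \ref{pairing} in the language of the Gross-Prasad functional $I^{can}$ and gives no further argument. You have in fact supplied more detail than the paper does, in particular on the passage from the integral over $G'(\QQ)\cdot A^0\backslash G'(\ad)$ to the integral over $H'(F)\backslash H'(\ad)$; the paper treats this conversion as implicit in the definitions of \cite{BB}.
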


\begin{cor}\label{ratpairing}  Let $\pi$ and $\pi'$ be as in the previous proposition and $\alpha$ as in the discussion
above.  Then $P_{\alpha}(\pi,\pi')^{-1}\cdot I^{can}(\pi,\pi')$ is a non-zero $\alpha(E(\pi)\otimes E(\pi'))$-bilinear pairing
$$ \pi^{(0)}_{E(\pi)}\otimes \pi^{\prime,(0)}_{E(\pi')}) \rightarrow  \alpha(E(\pi)\otimes E(\pi'))$$
that does not vanish on the tensor product of the minimal types at infinity.
\end{cor}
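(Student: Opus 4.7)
The plan is to combine three ingredients that are essentially already in place: the non-vanishing statement provided by Proposition~\ref{GPpairing}, the characterization of $P_\alpha(\pi,\pi')$ as the precise obstruction to the vanishing of the Gross-Prasad period integral, and the rationality property of the motivic generator $I^{mot}(\pi,\pi')$ of the rank one module $L_0^{mot}(\pi,\pi')$.

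First I would observe that Proposition~\ref{GPpairing} produces vectors $f\in\pi$ and $f'\in\pi'$ lying in the minimal $K$- and $K'$-types at infinity with $I^{can}(\pi,\pi')(f,f')\neq 0$; since the minimal-type isotypic components are already defined over the respective rational structures, one may arrange $f\in\pi^{(0)}_{E(\pi)}$ and $f'\in\pi^{\prime,(0)}_{E(\pi')}$, viewed as subspaces of the complex representations via $\alpha$. The discussion preceding the corollary records that $P_\alpha(\pi,\pi')=0$ if and only if the period integral vanishes identically on $\pi\otimes\pi'$, so the non-vanishing just established forces $P_\alpha(\pi,\pi')\neq 0$; in particular the normalization $P_\alpha(\pi,\pi')^{-1}\cdot I^{can}(\pi,\pi')$ is well-defined.

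Next, the defining relation $I^{can}(\pi,\pi')=P(\pi,\pi')\cdot I^{mot}(\pi,\pi')$ gives, after projecting to the $\alpha$-component,
\[
P_\alpha(\pi,\pi')^{-1}\cdot I^{can}(\pi,\pi') \;=\; \alpha\circ I^{mot}(\pi,\pi').
\]
By the preceding proposition $I^{mot}(\pi,\pi')$ sends $\pi^{(0)}_{E(\pi)}\otimes\pi^{\prime,(0)}_{E(\pi')}$ into $E(\pi)\otimes E(\pi')$, so applying $\alpha$ yields a pairing valued in $\alpha(E(\pi)\otimes E(\pi'))$ and $\alpha(E(\pi)\otimes E(\pi'))$-bilinear. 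Non-vanishing on the tensor product of the minimal types is then simply the witness $(f,f')$ produced in the first step, transported through the non-zero scalar $P_\alpha(\pi,\pi')^{-1}$.

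The corollary is, in this sense, a bookkeeping consequence of Proposition~\ref{GPpairing} together with the structural results about $L_0^{mot}$ and $P(\pi,\pi')$ imported from \cite{BB}. The substantive work, and the place where the real obstacle lives, has already been discharged upstream: the non-vanishing of the period integral on a minimal-type vector (which rests on the Flensted-Jensen/Burger-Sarnak combination entering \ref{FlJ} and \ref{pairing}) and the conditional automatic continuity hypothesis needed to make $I^{mot}(\pi,\pi')$ a legitimate object. Once those are granted, nothing deeper enters the derivation of \ref{ratpairing}.
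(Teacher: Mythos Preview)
Your proposal is correct and follows exactly the approach the paper intends: the corollary is stated without proof because it is meant to be immediate from Proposition~\ref{GPpairing}, the defining relation $I^{can}=P(\pi,\pi')\cdot I^{mot}$, and the rationality property of $I^{mot}$ recorded just before. Your unpacking of these steps is accurate, including the observation that non-vanishing on the complex minimal-type subspace forces non-vanishing on its $E(\pi)\otimes E(\pi')$-rational span.
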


Let $\pi^{(0)}_{\Qbar}$ and  $\pi^{\prime,(0)}_{\Qbar}$ 
be the $\Qbar$-subspaces generated by $\pi^{(0)}_{E(\pi)}$ and  $\pi^{\prime,(0)}_{E(\pi')}$ in  the respective
complex automorphic representations.  

\begin{thm}\label{mainthm}  Assume the automatic continuity hypothesis (Conjecture 4.3.1 of \cite{BB}) Let $\pi$ be a cuspidal automorphic representation of $G(\ad)$, with archimedean component $\pi_{\infty}$ in the discrete series and
minimal $K$-type $\tau$.  Suppose the Harish-Chandra
parameter of $\pi_{\infty}$ is sufficiently regular.   Let $f \in \pi$ be an automorphic form whose archimedean component lies in $\tau$; in particular,
$f \in \pi^{(0)}$.  Let $\pi'_{\infty}$ be a discrete series representation of $G'(\RR)$ that satisfies the conditions of \ref{GPpairing}.  
Then $f \in \pi^{(0)}_{\Qbar}$ if and only if, for every $\gamma \in G(\af)$, every 
automorphic representation $\pi'$ of $G'$ of infinity type $\pi'_{\infty}$, and every $f' \in \pi'_{\Qbar}$, 
\begin{equation}\label{condition} \lambda_{\gamma,f'}(f) := P_{\alpha}(\pi,\pi')^{-1}\cdot I^{can}(\pi,\pi')(f^{\gamma},f') \in \Qbar \end{equation}
where $f^{\gamma}(h) = f(h\gamma)$.

\end{thm}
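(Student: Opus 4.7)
The plan is to prove the two implications separately. The forward direction is essentially formal: if $f \in \pi^{(0)}_{\Qbar}$, then $f^{\gamma} \in \pi^{(0)}_{\Qbar}$ for every $\gamma \in G(\af)$, since the right regular $G(\af)$-action is defined over the field of definition of the rational structure, and by Corollary \ref{ratpairing} the map $P_{\alpha}(\pi,\pi')^{-1}\cdot I^{can}(\pi,\pi')(\cdot,f')$ restricts to a $\Qbar$-linear functional on $\pi^{(0)}_{\Qbar}$ whenever $f' \in \pi^{\prime,(0)}_{\Qbar}$. Hence $\lambda_{\gamma,f'}(f) \in \Qbar$, and condition (\ref{condition}) holds.

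For the converse, I would apply the criterion that $f \in \pi^{(0)} = \pi^{(0)}_{\Qbar}\otimes_{\Qbar}\CC$ lies in $\pi^{(0)}_{\Qbar}$ if and only if $\sigma(f) = f$ for every $\sigma \in \mathrm{Aut}(\CC/\Qbar)$ acting through the second tensor factor. Fix such a $\sigma$ and set $h = f - \sigma(f)$. Since $\tau$ inherits a $\Qbar$-rational structure from $\pi^{(0)}_{\Qbar}$, the isotypic subspace $V := V_{\tau}\otimes \pi_f$ is $\sigma$-stable and $h \in V$. Corollary \ref{ratpairing} makes each $\lambda_{\gamma,f'}$ the $\CC$-linear extension of a $\Qbar$-linear functional on $\pi^{(0)}_{\Qbar}$, whence $\lambda_{\gamma,f'}(\sigma(f)) = \sigma(\lambda_{\gamma,f'}(f)) = \lambda_{\gamma,f'}(f)$, the last equality by hypothesis (\ref{condition}). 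Therefore $\lambda_{\gamma,f'}(h) = 0$ for all admissible $(\gamma,\pi',f')$, and the converse reduces to showing that the collection $\{\lambda_{\gamma,f'}\}$ separates points of $V$.

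To establish this separation, let $N \subset V$ be the joint kernel. Varying $\gamma$ makes $N$ invariant under the right regular $G(\af)$-action, so irreducibility of $\pi_f$ forces $N = U\otimes \pi_f$ for some subspace $U \subset V_{\tau}$. For each $\pi'$ with $P_{\alpha}(\pi,\pi')\neq 0$, the Gross--Prasad multiplicity one theorem of \cite{SZ} supplies a factorization of $I^{can}(\pi,\pi')$ into local pairings whose archimedean component $B_{\infty}\colon \pi_{\infty}^{(0)}\otimes \pi^{\prime,(0)}_{\infty}\to \CC$ is $(\mathfrak{g}',K'_{\infty})$-invariant and non-vanishing on $V_{\tau}\otimes V_{\tau'}$ by Proposition \ref{GPpairing}. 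For $v \in U$ and any $w \in \pi_f$, the vanishing $\lambda_{\gamma,f'}(v\otimes w) = 0$, combined with irreducibility of $\pi'_f$ and the non-triviality of the finite-adic factor, forces $B_{\infty}(v,f'_{\infty}) = 0$ for every $f'_{\infty} \in \pi^{\prime,(0)}_{\infty}$. Thus $U$ is contained in the kernel of the induced $K'_{\infty}$-equivariant map $V_{\tau}\to (\pi^{\prime,(0)}_{\infty})^{\vee}$; triviality of this kernel completes the proof.

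The principal technical point is thus the injectivity of that induced map. Since $V_{\tau}|_{K'_{\infty}}$ decomposes with multiplicity one (by Gelfand--Tsetlin for $U(n)\supset U(n-1)$) into finitely many $K'_{\infty}$-types, what needs to be verified is that each such type also occurs in $(\pi^{\prime,(0)}_{\infty})^{\vee}$. Sufficient regularity of the Harish-Chandra parameter --- precisely the regularity already invoked in \ref{FlJ} and \ref{mintype} --- guarantees this via Blattner's formula, which makes $(\pi^{\prime,(0)}_{\infty})^{\vee}$ accommodate all $K'_{\infty}$-types sufficiently close to its minimal one. Quantifying this inclusion, and checking compatibility with the regularity thresholds inherited from Section 2, is where the bulk of the remaining work resides.
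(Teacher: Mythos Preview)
Your forward direction and the reduction of the converse to the separating property of $\{\lambda_{\gamma,f'}\}$ on $V=\tau\otimes\pi_f$ agree with the paper, and the argument that the common kernel $N$ is $G(\af)$-stable, hence of the form $U\otimes\pi_f$, is fine. The divergence from the paper, and the gap, is in the archimedean step.

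You reduce to showing that the $K'_\infty$-equivariant map $V_\tau \to (\pi^{\prime,(0)}_\infty)^\vee$ induced by $B_\infty$ is injective, and you propose to deduce this from the claim that every $K'_\infty$-constituent of $V_\tau|_{K'_\infty}$ also occurs in $(\pi^{\prime,(0)}_\infty)^\vee$. But that implication fails: multiplicity-freeness of $V_\tau|_{K'_\infty}$ only tells you that on each $K'_\infty$-isotypic summand the map is either zero or an embedding; the mere presence of that $K'_\infty$-type in the target says nothing about whether \emph{this particular} $(\mathfrak{h}',K'_\infty)$-intertwiner $B_\infty$ is non-zero on it. What you actually need is that $B_\infty$ does not vanish on any $K'_\infty$-constituent of $V_\tau$, and the only non-vanishing supplied by Proposition~\ref{GPpairing} is on the single constituent $\tau'$. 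Blattner multiplicities for $\pi'_\infty$ are beside the point here; the question is about the specific invariant form, and nothing in \S2 controls it on the other constituents.

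The paper sidesteps this entirely. It invokes Theorem~7.4 of \cite{H90}, an ergodic/density result: from the \emph{single} non-vanishing on minimal types (the hypothesis ``$0\neq p(v)$'' there, furnished by Proposition~\ref{GPpairing}), the translates $\lambda_{\gamma,f'}$ already span the full dual of $\tau\otimes\pi_f$. The mechanism is global---density of $G'$-translates in the automorphic quotient as $\gamma$ and $f'$ vary---rather than archimedean branching, so no statement about the other $K'_\infty$-types of $\tau$ is required. Your factorization-and-Blattner route, even if the Blattner claim were established, would still need the missing non-vanishing input to close.
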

\begin{proof}  The proof is exactly analogous to that of Theorem 7.6 of \cite{H90}.  We temporarily adopt the notation
of that proof:  thus $\pi$ contributes to the coherent cohomology space $\bar{H}^q([\mathcal{V}])$ for some
irreducible automorphic vector bundle $[\mathcal{V}]$ on the Shimura variety attached to $G$.  The space
$\bar{H}^q$ of \cite{H90} is elsewhere called {\it interior cohomology}; in \cite{BB} it is denoted $H^q_{!}$.
The regularity
hypothesis implies that the $\pi_f$-isotypic component of $\bar{H}^q([\mathcal{V}])$ consists entirely of the
classes of cusp forms (cf. \cite{BB}, 3.3.2); in particular, hypothesis (d) of \cite{H90} 7.6.  Hypotheses
(a) and (c) hold for similar reasons:  the point is that we can take $\fh$ to be a Cartan subalgebra of
$Lie(G')$ as well as $Lie(G)$, and the walls of a Weyl chamber for $G'$ form a subset
of the walls of a Weyl chamber for $G$, so if an infinitesimal character in $\fh^*$ is far from the walls for $G$
it is {\it a fortiori} far from the walls for $G'$.  Condition (b) of \cite{H90}, 7.6, which identifies $I^{can}$ as a cup product
in coherent cohomology, does not hold; instead we have
the condition that $\dim Hom_{(\mathfrak{h}',K')}(\pi^{(0)}_{\infty}\otimes \pi^{\prime,(0)}_{\infty},\CC) = 1$, which
requires automatic continuity.

Now the rational structure $\pi^{(0)}_{\Qbar}$ is invariant under the action of $G(\af)$, so it is obvious that
if $f \in \pi^{(0)}_{\Qbar}$ then (\ref{condition}) is satisfied for all $\pi'$ and all $f' \in \pi'_{\Qbar}$.  So it suffices
to show that the linear forms $\lambda_{\gamma,f'}$ span the space of linear forms on the (countable-dimensional) subspace
$\tau\otimes \pi_f$ of $\pi$.  This is the content of Theorem 7.4 of \cite{H90}; the condition $0 \neq p(v)$ in the statement of that
theorem is implied by the assertion concerning minimal types in \ref{GPpairing}.  

\end{proof}

\begin{rmk}  Again, if we know a priori that $\pi'_{\infty}$ is isolated in the automorphic spectrum of $G'$, then we can
apply \cite{BS} rather than the arguments in \cite{H90}.
\end{rmk}

\begin{rmk}  By using the more precise Gross-Prasad periods defined in \cite{BB}, we can obtain a criterion for
$f$ to be rational over the field $E(\pi)$, and not just over $\Qbar$.  The condition is that 
$Gal(\Qbar/E(\pi))$ acts on the quantities $\lambda_{\gamma,f'}(f)$ by permuting every appearance of $f$, $f'$,
$\pi$, $\pi'$, and $\alpha$, including in the period invariant appearing on the right-hand side of (\ref{condition}).
\end{rmk}

Theorem \ref{mainthm} characterizes $\Qbar$-rational forms on $G$ of (sufficiently regular) coherent cohomology type
by their pairings with $\Qbar$-rational forms on $G'$ of coherent cohomology type, but of course one needs to be able
to characterize the latter.  However,
it was noted during the proof that the regularity hypothesis for $\pi_{\infty}$ implied a regularity hypothesis for $\pi'_{\infty}$.
Indeed, it follows from the classical branching laws for the restriction of a representation of (compact) $U(m)$ to
$U(m-1)$ that conditions (a) and (b) of \ref{SR} for the minimal $K$-type $\tau$ imply the corresponding conditions for at least
one irreducible constituent $\tau'$ of its restriction to $K'$.   Thus by induction on $n$ we can assume that the $\Qbar$-rational 
$f'$ used to define the $\lambda_{\gamma,f'}$ have already been identified.  This gives a complete answer to
the question of Piatetski-Shapiro mentioned in the introduction in the case of Shimura varieties attached to unitary
groups, at least under a regularity hypothesis.  I don't know whether or
not the regularity hypothesis can be relaxed.

\begin{rmk}  The rationality criterion of Theorem \ref{mainthm} makes specific use of the representation $\pi^{\prime}_{\infty}$
that can be proved to pair non-trivially with $\pi_{\infty}$, thanks to the positivity property of the Flensted-Jensen
minimal matrix coefficients.   If we know independently that (the dual of) $\pi^{\prime}_{\infty}$ is weakly contained 
in $\pi_{\infty}$, then the analogue of Theorem \ref{mainthm} remains valid.  The pairings whose existence is predicted by
\cite{GGP} do not restrict in most cases to non-trivial pairings of minimal types; nevertheless, the 
Gross-Prasad conjecture for real unitary groups implies that each non-trivial pairing gives a new rationality criterion for coherent
cohomology.
\end{rmk}

\section{Gross-Prasad periods as cup products} 

The novel feature of this article is the introduction of a rationality criterion, valid for automorphic
representations of sufficiently regular discrete series type, that, unlike the criterion of \cite{H90, HL},
is not derived from a cup product in coherent cohomology.  In this final section we characterize
cup-product pairings among all pairings given by integrals over $H'(\ad)$, and show that
certain $\pi_{\infty}$  never admit cup product pairings with coherent cohomology classes on
the Shimura variety attached to $H'$.  Thus the rationality criterion of the present paper is substantially
more general than that of \cite{H90}.  

The cup-product pairings are of independent interest, since they can be used to study period relations.
This will be explained in forthcoming work.

\subsection{Parameters}

In this section we let $H$ denote the real Lie group $U(r,s)$, the unitary group of the standard 
hermitian form $I_{r,s} = \begin{pmatrix} I_r & 0 \\ 0 & -I_s \end{pmatrix}$.  The maximal compact subgroup
$K = U(r) \times U(s)$, the subgroup that respects the decomposition of $\CC^r$ into positive-definite and negative-definite
subspaces as indicated by the form of $I_{r,s}$.
For a maximal torus $T$ we take the group of diagonal matrices in $H$; this is also a maximal torus of $K$.
An irreducible finite-dimensional representation $(\sigma,V)$ of
$H$ is determined by its highest weight $a_{\sigma}$ relative to $T$ and the upper-triangular Borel algebra $\mathfrak{b}$ of 
$\mathfrak{h} = Lie(H)_{\CC}$; $a_{\sigma}$ is written in the usual way as a non-increasing
$n$-tuple of integers $(a_1 \geq a_2 \geq \dots \geq a_n)$.   

Let $\rho = (\frac{n-1}{2},\frac{n-3}{2},\dots, \frac{1-n}{2})$ be the half-sum of positive roots for $\mathfrak{b}$.  We let
$\tilde{\rho} = \rho + \frac{n-1}{2}(1,1,\dots,1) = (n-1,n-2,\dots,1,0)$.   The infinitesimal character of the representation
$(\sigma,V)$ above is the element $\lambda_{\sigma} = a_{\sigma} + \rho$ of $\fh^*$; the element
$\tilde{\lambda}_{\sigma} = a_{\sigma} + \tilde{\rho} = (a_i + n-i, i = 1, \dots, n)$ will be called the {\it Hodge parameter}, for reasons that will soon be clear.

We consider the $L$-packet $\Pi_{\sigma} = \Pi^{r,s}_{\sigma}$ of discrete series representations $\pi$ of $H$ such that
$H^i(\fh,K;\pi\otimes V^{\vee}) \neq 0$ for some $i$ (necessarily $i = rs = \dim H/K$).  Then  $\Pi_{\sigma}$ contains
$\begin{pmatrix} n \\ r \end{pmatrix}$ members, indexed by permutations of the entries of $\lambda_{\sigma}$
or $\tilde{\lambda}_{\sigma}$, modulo (subsequent) permutations that preserve the first $r$ entries.  Write
$$\lambda_{\sigma} = (\lambda_1 > \lambda_2 > \dots > \lambda_n); ~~ \tilde{\lambda}_{\sigma} = (\tilde{\lambda}_1 > \tilde{\lambda}_2 > \dots > \tilde{\lambda}_n)$$
It's more convenient
to index $\Pi_{\sigma}$ by $(r,s)$-shuffles of the $\lambda_i$, namely $(r,s)$-tuples of the form
$\lambda = (a_1 > \dots > a_r; b_1 > \dots > b_s)$ which is a permutation of the entries of $\lambda_{\sigma}$.
If $\pi \in \Pi_{\sigma}$ corresponds to $\lambda$, then $\lambda$ is the {\it Harish-Chandra parameter} of $\pi$, and
we write $\pi = \pi_{\lambda}$.   We also define $\tilde{\lambda} = \lambda + \frac{n-1}{2}(1,1,\dots,1)$. 

Let $\Delta^{nc,+}$ be the set of 
positive non-compact roots -- roots of the form 
$$\alpha_{i,r+j}:  (c_1,c_2, \dots, c_n) \mapsto c_i - c_{r+j}, 1 \leq i \leq r, 1 \leq j \leq s.$$
These are by definition the roots of $T$ acting on $\fp^- \subset \mathfrak{h}$, the Lie algebra of matrices of the form
$ \begin{pmatrix} 0_r & X \\ 0 & 0_s \end{pmatrix}$, which is also isomorphic to the
antiholomorphic tangent space of the hermitian symmetric space $H/K$ at the fixed point of $K$. 
In other words, the choice of $\mathfrak{b}$, and therefore $\rho$,
is consistent with a specific choice of complex structure.
The {\it degree} $q(\lambda)$ of $\lambda$ is the number of pairs $(i,j)$ such that $a_i > b_j$, or equivalently
to the cardinality of 
$$\Delta^{nc,+}(\lambda) = \{\alpha \in \Delta^{nc,+} ~|~ \langle \alpha,\lambda \rangle > 0.$$
This is related to the usual notion of length by the formula
$q(\lambda) = rs - \ell(w)$, where $w$ is the permutation that takes $\lambda_{\sigma}$
to $\lambda$.
There are two extreme elements of $\Pi_{\sigma}$:  the {\it antiholomorphic representation}, with Harish-Chandra
parameter $\lambda_{\sigma}$ (and degree $rs$), and the {\it holomorphic representation}, corresponding to the longest shuffle, equivalently to the
Harish-Chandra parameter $(\lambda_{s+1} > \dots > \lambda_n;\lambda_1 > \dots > \lambda_s)$ (with degree $0$).  

The integer $q(\lambda)$ is the degree of coherent cohomology to which $\lambda$ contributes.  More precisely, to
any $\lambda$ we associate its {\it coherent parameter} $\Lambda = \lambda - \rho$, viewed as the highest weight
of an irreducible representation $W_{\Lambda}$ of $K$.   Let  $\fq = \fp^- \oplus Lie(K)$.  Then
\begin{equation}\label{dbar} \dim H^{q(\lambda)}(\fq,K;\pi_{\lambda}\otimes W_{\Lambda}^{\vee}) = 1 \end{equation}
and all other $H^i(\fq,K;\pi_{\lambda}\otimes W)$ vanish as $W$ runs over irreducible representations of $K$.

The space in \ref{dbar} is spanned by an element of $(\wedge^{q(\lambda)}(\fp^-)^*\otimes \pi_{\lambda}\otimes W_{\Lambda}^{\vee})^K$, or
equivalently by a homomorphism
\begin{equation}\label{hom} h_{\lambda} \in Hom_K(\wedge^{q(\lambda)}(\fp^-)\otimes W_{\Lambda}, \pi_{\lambda}). \end{equation}
The image of $h_{\lambda}$ is an irreducible $K$-type $\tau_{\lambda}$, the minimal (or Blattner) $K$-type, and its highest weight,
also denoted $\tau$, is the {\it Blattner parameter} of $\pi_{\lambda}$ (or of $\lambda$).   The formula for $\tau$ is given in terms of
the chamber $C(\lambda)$ for which $\lambda$ is positive
\begin{equation}\label{blatt} \tau = \Lambda + \sum_{\alpha \in \Delta^{nc,+}(\lambda)} \alpha. \end{equation}

For each $\alpha \in \Delta^{nc,+}$, choose a non-zero basis vector $X_{\alpha}$ in the corresponding
root space.

\begin{lem}\label{nonzero} The homomorphism $h_{\lambda}$ is non-zero on the vector
$$ v_{\lambda} = \wedge_{\alpha \in \Delta^{nc,+}(\lambda)} X_{\alpha} \otimes w_{\Lambda}$$
where $w_{\Lambda}$ is a basis vector of the highest weight subspace of $W_{\Lambda}$, and
$h(v_{\lambda})$ generates the highest weight space of $\tau$.
\end{lem}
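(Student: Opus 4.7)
The plan is to verify three claims in order: $v_\lambda$ has $T$-weight equal to the Blattner parameter $\tau$; $v_\lambda$ is annihilated by every positive root vector of $\fk$; and the $K$-submodule generated by $v_\lambda$ in $D:=\wedge^{q(\lambda)}(\fp^-)\otimes W_\Lambda$ is the unique copy of $\tau_\lambda$, which will force $h_\lambda(v_\lambda)$ to span the highest weight line of $\tau_\lambda$.

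First, the weight computation is immediate from the Blattner formula \eqref{blatt}: $\wedge_{\alpha\in\Delta^{nc,+}(\lambda)}X_\alpha$ has weight $\sum_{\alpha\in\Delta^{nc,+}(\lambda)}\alpha$, $w_\Lambda$ has weight $\Lambda$, and the sum is $\tau$. Since $h_\lambda$ is $K$-equivariant, $h_\lambda(v_\lambda)$ automatically lies in the $\tau$-weight subspace of $\pi_\lambda$, which inside $\tau_\lambda$ is the one-dimensional highest weight line; the only remaining content is non-vanishing.

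Next, to show $v_\lambda$ is a $K$-highest weight vector, I would fix a positive root $\beta$ of $\fk$ with root vector $E_\beta$ and apply the Leibniz rule. The contribution from $w_\Lambda$ vanishes since $w_\Lambda$ is a highest-weight vector of $W_\Lambda$, and each wedge-summand has the form $(\cdots)\wedge[E_\beta,X_\alpha]\wedge(\cdots)\otimes w_\Lambda$, where $[E_\beta,X_\alpha]\propto X_{\alpha+\beta}$ when $\alpha+\beta$ is a root. The key combinatorial observation is: for $U(r,s)$, whenever $\alpha=\alpha_{i,r+j}\in\Delta^{nc,+}(\lambda)$ and $\alpha+\beta$ is a root, one has $\alpha+\beta\in\Delta^{nc,+}(\lambda)$ as well. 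Indeed, either $\beta=e_k-e_i$ with $k<i$, giving $\alpha+\beta=\alpha_{k,r+j}$ with $a_k>a_i>b_j$, or $\beta=e_{r+j}-e_{r+l}$ with $l>j$, giving $\alpha+\beta=\alpha_{i,r+l}$ with $a_i>b_j>b_l$. In either case $X_{\alpha+\beta}$ is already a wedge factor, so the term vanishes by repetition and $E_\beta\cdot v_\lambda=0$.

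Finally, $v_\lambda$ therefore generates an irreducible $K$-submodule $V_\tau\subset D$ of highest weight $\tau$, isomorphic to $\tau_\lambda$. The identity $\dim\mathrm{Hom}_K(D,\pi_\lambda)=1$ from \eqref{dbar}, combined with the standard multiplicity-one property $m(\tau_\lambda,\pi_\lambda)=1$ of the Blattner type, forces $m(\tau_\lambda,D)=1$ (and that no other $K$-type common to $D$ and $\pi_\lambda$ contributes), so $V_\tau$ is the unique $\tau_\lambda$-submodule of $D$. Since $h_\lambda\neq 0$ with image equal to $\tau_\lambda\subset\pi_\lambda$, its restriction to $V_\tau$ cannot vanish, and by Schur's lemma is an isomorphism. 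Hence $h_\lambda(v_\lambda)\neq 0$, and being a $\tau$-weight vector inside $\tau_\lambda$, it spans the highest-weight line as claimed.

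The hardest step will be the combinatorial closure property in the middle paragraph — that $\Delta^{nc,+}(\lambda)$ is closed under the addition of any positive compact root whose sum with one of its elements is a root — though, as indicated, it reduces to a short case check in $U(r,s)$. After that, the remainder is a Schur-lemma reduction using \eqref{dbar}.
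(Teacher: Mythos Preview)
Your weight computation (step 1) and the combinatorial check that $v_\lambda$ is a $K$-highest weight vector (step 2) are correct.  The gap is in step 3: equation \eqref{dbar} asserts that the relative Lie algebra cohomology $H^{q(\lambda)}(\fq,K;\pi_\lambda\otimes W_\Lambda^\vee)$ is one-dimensional, not that the cochain space $\mathrm{Hom}_K(D,\pi_\lambda)$ is.  The latter is only a space in which a representing cocycle lives; cohomology is a subquotient, and nothing in \eqref{dbar} rules out other $K$-types of $D$ appearing in $\pi_\lambda$ (with contributions killed by the differential).  So your deduction of $m(\tau_\lambda,D)=1$ from \eqref{dbar} is not justified as written.

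The paper closes this gap --- and in fact bypasses your step 2 entirely --- by a single extremality observation.  Pair any weight $\sum_{\alpha\in S}\alpha+\nu$ of $D=\wedge^{q(\lambda)}\fp^-\otimes W_\Lambda$ (where $|S|=q(\lambda)$ and $\nu$ is a weight of $W_\Lambda$) against $\lambda$.  Since $\langle\alpha,\lambda\rangle>0$ exactly for $\alpha\in\Delta^{nc,+}(\lambda)$, and since $\lambda$ is regular $K$-dominant so that $\Lambda$ is the unique $\lambda$-maximal weight of $W_\Lambda$, the pairing is uniquely maximized at $S=\Delta^{nc,+}(\lambda)$, $\nu=\Lambda$.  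Hence the weight $\tau$ occurs in $D$ with multiplicity one.  This immediately gives $m(\tau_\lambda,D)\le 1$, and since the image of $h_\lambda$ is $\tau_\lambda$ and therefore contains a weight-$\tau$ vector, $h_\lambda$ cannot kill the unique weight-$\tau$ line spanned by $v_\lambda$.  Your Leibniz-rule computation, while valid, becomes unnecessary: multiplicity one of the weight already forces $v_\lambda$ to be a highest weight vector.
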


\begin{proof}  This is presumably well-known but I include the one-line proof:  if we take a positive
root system for $K$ compatible with the chamber $C(\lambda)$, then $v_{\lambda}$, which is
a weight vector for the weight \ref{blatt}, is an extreme vector
in this chamber in the tensor product $\wedge^{q(\lambda)}(\fp^-)\otimes W_{\Lambda}$ and in particular
has multiplicity one.  Since the image of $h_{\tau}$ contains a vector of weight \ref{blatt}, it can't vanish
on $v_{\lambda}$.

\end{proof}

\subsection{Rationality}

Now we return to the notation of \S 3, with $G = GU(W)$, $G' = G(U(W') \times U(W_0))$.  The elements of the CM type
$\Sigma$ are denoted $v$, and for each $v$ the space $W_v = W \otimes_{F,v}\CC$ is a complex
hermitian space with signature $(r_v,s_v)$.  We assume that $W'_v$ has signature $(r_v - 1, s_v)$ for each $v$; 
although the discussion can be modified if this is not the case, we can always choose $W'$ with this property,
and it simplifies the discussion.  

A discrete series representation $\pi$ of $G(\RR)$ restricts to a sum of discrete series representations
of $H(\RR) = \prod_v U(r_v,s_v) := \prod_v H_v$.  In fact, there is no ambiguity unless $r_v = s_v$ for some $v$; each such
place contributes two summands, and we choose one of the two -- which we still call $\pi$ -- and replace $H_v$ at such a point by its
(real) identity component $H_v^+$.  We thus obtain a $\Sigma$-tuple of
Harish-Chandra parameters $(\lambda_v, v \in \Sigma)$.  Let $K_v = U(r_v,0)\times U(0,s_v) \subset U(r_v,s_v)$
be a maximal compact subgroup, as above, and let $K = \prod_v K_v$.  As above, the signature determines a holomorphic structure on the
the symmetric space $X(G)^+ = H(\RR)/K$ -- the product over $v$ of the spaces denoted $\fp^-$ above is the space
of anti-holomorphic tangent vectors at the fixed point of $K$ in the symmetric space -- and therefore we can assign
to each $v$ a reference parameter 
$$\lambda_{\sigma,v} = (\lambda_{1,v} > \dots > \lambda_{n,v})$$
which is the infinitesimal character of an irreducible finite-dimensional representation $(\sigma_v,V_v)$ of $H_v$.
This is a permutation of $\lambda_v$, which we write as above
$$\lambda_v = (a_{1,v}> \dots > a_{r_v,v}; b_{1,v} > \dots > b_{s_v,v}).$$
Let $\Lambda_v = \lambda_v - \rho_v$ be the $v$-component of the coherent parameter of $\pi$,
$\Lambda = (\Lambda_v, v \in \Sigma)$.
It defines an automorphic vector bundle $[W_{\Lambda}]$ on the Shimura variety $Sh(G,X(G))$, where $X(G)$ is the
(disconnected) hermitian symmetric space attached to $G(\RR)$ and $X(G)^+ \subset X(G)$ is a fixed 
$H(\RR)$-invariant component (also invariant under the identity component $G(\RR)^0 \subset G(\RR)$.  As in the
proof of \ref{mainthm}, we follow \cite{H90} and denote by $\bar{H}^*([W_{\Lambda}]$ the 
interior cohomology of $[W_{\Lambda}]$.  

Let $H' = U(W')\times U(W_0)$, $K' = \prod_v K'_v$, $K'_v = K_v \cap H'(\RR) = U(r_v - 1,0)\times U(1,0)\times U(0,s_v)$;
we drop the $0$'s in what follows.  
Write $\Lambda_v = (\alpha_{1,v} \geq \dots \geq \alpha_{r_v,v}; \beta_{1,v} \geq \dots \geq \beta_{s_v,v})$; this is a 
dominant integral weight of a finite-dimensional representation $W_{\Lambda_v}$ of $K_v$.  It follows from the
classical branching formula that the restriction of  $W_{\Lambda_v}$ to $K'_v$
contains the representation with highest weight
$$\Lambda'_v = (\alpha_{1,v} \geq \dots \geq \alpha_{r_v-1,v};\alpha_{r_v,v}; \beta_{1,v} \geq \dots \geq \beta_{s_v,v})$$
where the semicolons separate the weights for $U(r_v) \times U(1) \times U(s_v)$.  Let $\Lambda' = (\Lambda'_v)$ be
the corresponding highest weight for $K'$, and let $\lambda' = \Lambda' + \rho'$, where $\rho'$ is by analogy the half-sum of
the positive roots of $H'$ relative to $T$ for the chamber containing the chosen positive chamber for $(H,T)$.
With the above notation, we have
\begin{equation}\label{lambda'} \lambda'_v = (a_{1,v} - \frac{1}{2} > \dots > a_{r_v-1,v}-- \frac{1}{2}; \alpha_{r_v,v}; b_{1,v} + \frac{1}{2} > \dots > b_{s_v,v}+ \frac{1}{2}).\end{equation}

\begin{hyp}\label{regw}  The highest weight $a_{\sigma,v} =  \lambda_{\sigma,v} - \rho$ of $(\sigma_v,V_v)$ is a regular
character of $T$ for each $v$.   Equivalently, for each $v$, $\lambda_{i,v} - \lambda_{i+1,v} \geq 2$, $i = 1, \dots, n-1$.
\end{hyp}

\begin{lem}\label{HC}  Under Hypothesis \ref{regw}, $\lambda'$ is regular for the root system of $(H',T)$, and belongs to
$(\frac{n-2}{2} + \ZZ)^{n-1}$.  It is therefore 
the Harish-Chandra parameter for a unique discrete series representation $\pi_{\lambda'}$ of $H'(\RR)$.
\end{lem}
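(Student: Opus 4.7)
The claim is a place-by-place verification that reads off both assertions directly from formula (\ref{lambda'}). The key structural observation is that the root system of $(H'_v,T)$ coincides with that of the non-compact factor $U(W'_v) = U(r_v - 1, s_v)$, since the remaining factor $U(W_{0,v}) = U(1)$ is a torus; in particular the middle entry $\alpha_{r_v,v}$ of $\lambda'_v$ plays no role in regularity and merely records the character of the compact $U(1)$-factor.

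For regularity, I would check that the remaining $n-1$ entries, namely $a_{1,v} - \tfrac{1}{2} > \dots > a_{r_v-1,v} - \tfrac{1}{2}$ together with $b_{1,v} + \tfrac{1}{2} > \dots > b_{s_v,v} + \tfrac{1}{2}$, are pairwise distinct. Each of the two strings is strictly decreasing by construction, so the only possible collision is between an ``$a$-entry'' and a ``$b$-entry'', which would force $a_{i,v} - b_{j,v} = 1$ for some $i,j$. This is exactly where Hypothesis \ref{regw} enters: the condition $\lambda_{k,v} - \lambda_{k+1,v} \geq 2$ telescopes to give $|\mu - \mu'| \geq 2$ for any two distinct entries $\mu,\mu'$ of $\lambda_{\sigma,v}$, and since $a_{i,v}$ and $b_{j,v}$ are two such distinct entries, the collision is ruled out.

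For the integrality statement, every $a_{i,v}$ and $b_{j,v}$ lies in $\tfrac{n-1}{2} + \ZZ$, being a coordinate of $\lambda_{\sigma,v} = a_{\sigma,v} + \rho$ with $a_{\sigma,v} \in \ZZ^n$. The shifts $\pm \tfrac{1}{2}$ then place all $n-1$ coordinates of $\lambda'_v$ other than $\alpha_{r_v,v}$ into $\tfrac{n-2}{2} + \ZZ$, as claimed.

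With regularity and integrality in hand, the final assertion reduces to the Harish-Chandra classification: a regular element of $(\mathfrak{h}')^*$ whose shift by $\rho'$ integrates to the compact Cartan determines, up to the action of $W(K')$, a unique discrete series representation of $H'(\RR)$, with the central character of the $U(1)$-factor read off from $\alpha_{r_v,v}$. The only genuine obstacle is the combinatorial cross-condition $a_{i,v} - b_{j,v} \neq 1$ in the regularity step, and Hypothesis \ref{regw} is manifestly tailored to guarantee it.
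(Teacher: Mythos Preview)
Your proposal is correct and follows essentially the same route as the paper's own argument, which simply observes that the claim is ``clear from the form of \eqref{lambda'}: under the regularity hypothesis, none of the $a_{i,v} - \frac{1}{2}$ can coincide with any of the $b_{j,v} + \frac{1}{2}$.'' You spell this out in more detail---separating the $U(1)$ factor, telescoping the hypothesis $\lambda_{k,v}-\lambda_{k+1,v}\geq 2$ to force $|a_{i,v}-b_{j,v}|\geq 2$, and verifying the integrality condition explicitly---but the underlying idea is identical.
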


This is clear from the form of \ref{lambda'}:  under the regularity hypothesis, none of the
$a_{i,v} - \frac{1}{2}$ can coincide with any of the $b_{j,v} + \frac{1}{2}$.

We now consider the following hypothesis:

\begin{hyp}\label{knownknown}  For each $v$, the parameter $a_{r_v,v}$ equals $\lambda_{n,v}$, the smallest
of the $\lambda_{i,v}$.
\end{hyp}

Of the $\prod_v \begin{pmatrix} n \\ r_v \end{pmatrix}$ elements $\pi_{\lambda} \subset \Pi_{\sigma}$, the set of parameters 
satisfying \ref{knownknown} is 
$$\prod_v \frac{\begin{pmatrix} n-1 \\ s_v \end{pmatrix}}{\begin{pmatrix} n \\ s_v \end{pmatrix}} = \prod_v \frac{r_v}{n}$$
of the total.

Let $\Delta^{\prime,nc,+}$ be the set of positive non-compact roots for $(H',T)$, defined as in the previous
section.  We can view $\Delta^{\prime,nc,+}$ as the set of $\alpha_{i,r_v+j} \in \Delta^{nc,+}$ with $1 \leq i \leq r_v - 1; 1 \leq j \leq s_v$
for $v \in \Sigma$.  Hypothesis \ref{knownknown} is equivalent to the hypothesis that 
\begin{equation}\label{compat}  \Delta^{\prime,nc,+}(\lambda') = \Delta^{nc,+}(\lambda). \end{equation}

We define $X(G')^+ \subset X(G')$, $\fq'$, and the Shimura variety $Sh(G',X(G'))$, as in the previous paragraph; we also
define $\Lambda' = \lambda' - \rho'$, the representation $W_{\Lambda'}$ of $K'$, and the automorphic vector bundle $[W_{\Lambda'}]$ on 
$Sh(G',X(G'))$.
The inclusion of the Shimura data $(G',X(G')) \subset (G,X(G))$ defines an inclusion of Shimura varieties, and
by functoriality a restriction map of $\bar{\partial}$-cohomology
\begin{equation}\label{res}  ~~~ H^{q(\lambda)}(\fq,K;\pi_{\lambda}\otimes W_{\Lambda}^{\vee}) \ra H^{q(\lambda)}(\fq',K';\pi_{\lambda'}\otimes W_{\Lambda'}^{\vee}). \end{equation}
It follows immediately from Lemma \ref{nonzero} and \ref{compat} that
\begin{prop}\label{dbarcompat}   Assume Hypothesis \ref{regw}.   
\begin{itemize}
\item[(a)]  If $\lambda$ satisfies Hypothesis \ref{knownknown}, then the map (\ref{res}) is an isomorphism of $1$-dimensional spaces.
\item[(b)]  If $\lambda$ does not satisfy \ref{knownknown}, then (\ref{res}) is the zero map.
\end{itemize}
\end{prop}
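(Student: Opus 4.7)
The plan is to combine the one-dimensionality (or vanishing) of each cohomology space given by Lemma \ref{nonzero} and (\ref{dbar}) with an explicit computation on the distinguished generator $v_\lambda$.

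For part (b), suppose Hypothesis \ref{knownknown} fails at some archimedean place $v$, so that $a_{r_v,v} > \lambda_{n,v}$. Since $\lambda_{n,v}$ is the minimum entry of $\lambda_{\sigma,v}$, it equals some $b_{k,v}$, and the non-compact root $\alpha_{r_v, r_v+k}$ therefore lies in $\Delta^{nc,+}(\lambda)$. It does \emph{not} lie in $\Delta^{\prime,nc,+}$, since its first index is $r_v$ (whereas $\Delta^{\prime,nc,+}$ requires $i \leq r_v - 1$). Combined with (\ref{compat}), this forces $q(\lambda) > q(\lambda')$, so by (\ref{dbar}) the target $H^{q(\lambda)}(\fq',K';\pi_{\lambda'}\otimes W_{\Lambda'}^{\vee})$ vanishes and the map is automatically zero.

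For part (a), Hypothesis \ref{knownknown} and (\ref{compat}) give $q(\lambda) = q(\lambda')$ and $\Delta^{\prime,nc,+}(\lambda') = \Delta^{nc,+}(\lambda)$, so both spaces are one-dimensional, generated respectively by $h_\lambda$ and $h_{\lambda'}$. I would model (\ref{res}) at the cochain level as the composition of (i) restriction along $\fp^{\prime,-} \hookrightarrow \fp^-$ and $W_{\Lambda'} \hookrightarrow W_\Lambda|_{K'}$ with (ii) composition with the non-trivial $(\fh',K')$-intertwiner $\pi_\lambda \to \pi_{\lambda'}$ produced by the Flensted-Jensen argument in the proof of Proposition \ref{FlJ}. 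The key observation is that the distinguished vector
$$v_\lambda = \wedge_{\alpha \in \Delta^{nc,+}(\lambda)} X_\alpha \otimes w_\Lambda$$
already lies in $\wedge^{q(\lambda)} \fp^{\prime,-} \otimes W_{\Lambda'}$ under this identification: the wedge factors are in $\fp^{\prime,-}$ by (\ref{compat}); and $w_\Lambda$, viewed as a $T$-weight vector of weight $\Lambda = \Lambda'$, is the highest-weight vector of the unique branching summand $W_{\Lambda'} \subset W_\Lambda|_{K'}$ by the formula for $\Lambda'$. Hence $v_\lambda = v_{\lambda'}$.

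Evaluating the restricted cocycle at $v_{\lambda'} = v_\lambda$ yields the image under the intertwiner of $h_\lambda(v_\lambda)$, which by Lemma \ref{nonzero} is a non-zero highest-weight vector for the minimal $K$-type $\tau$ of $\pi_\lambda$. The content of the Flensted-Jensen positivity invoked in the proof of \ref{FlJ} is precisely that this intertwiner, built from the leading matrix coefficient, does not vanish on $\tau$; by $K'$-equivariance it carries the highest-weight line of $\tau$ onto the highest-weight line of the minimal $K'$-type $\tau'$ of $\pi_{\lambda'}$. Applying Lemma \ref{nonzero} to $\lambda'$ identifies the result with a non-zero multiple of $h_{\lambda'}(v_{\lambda'})$, the generator of the target. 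Thus (\ref{res}) is a non-zero map between one-dimensional spaces, i.e., an isomorphism.

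The main obstacle I anticipate is the compatibility check that the geometric $\bar\partial$-restriction on automorphic vector bundles really induces, at the level of $(\fq',K')$-cohomology, exactly the algebraic ``restrict and project'' described above, with the projection onto $\pi_{\lambda'}$ agreeing up to non-zero scalar with the Flensted-Jensen intertwiner. Since only non-triviality is needed here, one can proceed by uniqueness: there is (up to scalar) a unique $(\fh',K')$-intertwiner $\pi_\lambda \to \pi_{\lambda'}$, so the scalar is well-defined, and the positivity of the leading matrix coefficient forces it to be non-zero. Pinning the scalar down canonically would require more delicate normalization work, but that is not required for the statement at hand.
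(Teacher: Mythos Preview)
Your argument is basically correct and follows the same skeleton as the paper's one-line proof (``immediate from Lemma~\ref{nonzero} and~\ref{compat}''), but you supply considerably more detail than the paper does, and in one place you import machinery the paper does not invoke for this proposition.

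For part~(b), your conclusion is right but the phrase ``combined with~(\ref{compat})'' is not quite what you want: equation~(\ref{compat}) is the \emph{equality} $\Delta^{\prime,nc,+}(\lambda')=\Delta^{nc,+}(\lambda)$, which by the sentence preceding it is \emph{equivalent} to Hypothesis~\ref{knownknown} and therefore fails here.  What you actually need (and what is implicit in the derivation of~(\ref{compat}) under Hypothesis~\ref{regw}) is the unconditional containment $\Delta^{\prime,nc,+}(\lambda')=\Delta^{nc,+}(\lambda)\cap\Delta^{\prime,nc,+}\subseteq\Delta^{nc,+}(\lambda)$; your exhibited root $\alpha_{r_v,r_v+k}$ then shows the containment is strict, giving $q(\lambda')<q(\lambda)$ and hence vanishing of the target by~(\ref{dbar}).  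This is a one-line fix.

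For part~(a), the paper's argument is purely combinatorial: under~(\ref{compat}) one has $v_\lambda=v_{\lambda'}$, and Lemma~\ref{nonzero} says both $h_\lambda$ and $h_{\lambda'}$ are nonzero there, so the restricted cocycle generates the target.  You go further and worry---reasonably---about why the projection $\pi_\lambda\to\pi_{\lambda'}$ implicit in the target of~(\ref{res}) does not kill $h_\lambda(v_\lambda)$, and you resolve this by invoking the Flensted--Jensen positivity from the proof of Proposition~\ref{FlJ}.  That is a legitimate and self-contained way to close the gap (the matrix-coefficient construction there is already on $K$-finite vectors and pairs $\tau$ nontrivially with $\tau'$ by design), but it is genuinely extra input: the paper treats the nonvanishing of the intertwiner on the minimal $K$-type as part of the ambient framework of \cite{H90,HL} rather than something to be reproved here.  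So your route is the same in spirit but more explicit about a point the paper leaves to the reader.
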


For global automorphic representations $\pi$ and $\pi'$ whose
archimedean components satisfy Hypothesis \ref{knownknown}, the canonical period integral $I^{can}(\pi,\pi')$ can be identified with
cup products in coherent cohomology, by the formalism of \cite{H90,HL}.   The following theorem 
is an immediate consequence of Lemma 7.5.5 of \cite{H90} and the definitions:

\begin{thm}\label{GPknownknown}  Suppose $\lambda$ satisfies Hypotheses \ref{regw} and \ref{knownknown}.  Let $\pi$ and $\pi'$ be cuspidal automorphic
representations of $G$ and $G'$, respectively, with archimedean components $\pi_{\lambda}$ and $\pi_{\lambda'}^{\vee}$.
Then the Gross-Prasad period invariants $P_{\alpha}(\pi,\pi')$ are algebraic numbers.
\end{thm}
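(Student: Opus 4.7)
The plan is to reduce everything to the well-studied rationality of cup products in coherent cohomology of Shimura varieties, exploiting the new input that Hypothesis \ref{knownknown} makes $I^{can}(\pi,\pi')$ computable via such a cup product.

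First I would use Proposition \ref{dbarcompat}(a) to observe that under Hypotheses \ref{regw} and \ref{knownknown}, the restriction map on relative Lie algebra cohomology is a nonzero isomorphism of one-dimensional spaces, so pullback in coherent cohomology from $Sh(G,X(G))$ to $Sh(G',X(G'))$ sends the class of a cusp form $f \in \pi$ contributing to $\bar{H}^{q(\lambda)}([W_{\Lambda}])$ to a nonzero class in $\bar{H}^{q(\lambda)}([W_{\Lambda'}])$, and analogously after twisting by any $\gamma \in G(\af)$. Pairing with a cusp form $f' \in \pi'$ whose archimedean component is in the Blattner type of $\pi_{\lambda'}^{\vee}$ --- so that it contributes to $\bar{H}^{q(\lambda')}([W_{\Lambda'}]^{\vee})$ in the complementary degree --- Lemma 7.5.5 of \cite{H90} now identifies the archimedean period integral computing $I^{can}(\pi,\pi')(f,f')$ with a cup product in interior coherent cohomology of $Sh(G',X(G'))$, evaluated against the fundamental class. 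This is exactly the hypothesis (b) of \cite{H90}, 7.6, which was noted to \emph{fail} in general in the proof of Theorem \ref{mainthm}, but which holds precisely when \ref{knownknown} is in force.

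Next I would invoke the fact, proved in \cite{H90} (and underlying the whole formalism of \cite{BB}), that cup products in interior coherent cohomology respect the $\Qbar$-rational (in fact $E(\pi)\otimes E(\pi')$-rational, via the fixed embedding $\alpha$) structures on the cohomology groups of the automorphic vector bundles $[W_{\Lambda}]$ and $[W_{\Lambda'}]$ coming from canonical models. Consequently, when $f$ and $f'$ lie in $\pi^{(0)}_{E(\pi)}$ and $\pi^{\prime,(0)}_{E(\pi')}$ respectively and their archimedean components are chosen to be the Blattner vectors $v_{\lambda}$, $v_{\lambda'}$ of Lemma \ref{nonzero}, the resulting value $I^{can}(\pi,\pi')(f,f')$ lies in $\alpha(E(\pi)\otimes E(\pi'))\subset \Qbar$; moreover, by the nonvanishing part of Lemma \ref{nonzero}, one may choose such $f$, $f'$ for which this value is nonzero (for some Hecke twist $f^\gamma$), thanks to the minimal-type statement in Proposition \ref{GPpairing}.

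Finally I would compare with the motivic pairing: by construction $I^{mot}(\pi,\pi')$ is a generator of the free rank-one $E(\pi)\otimes E(\pi')\otimes\CC$-module $L_0^{mot}(\pi,\pi')$ taking values in $E(\pi)\otimes E(\pi')$ on the rational subspaces $\pi^{(0)}_{E(\pi)}\otimes \pi^{\prime,(0)}_{E(\pi')}$. Since $I^{can}(\pi,\pi') = P(\pi,\pi')\cdot I^{mot}(\pi,\pi')$ and both factors of the product are algebraic-valued on the rational subspaces (on which $I^{mot}$ is moreover nonzero somewhere, by the previous paragraph), the scalar $P_{\alpha}(\pi,\pi')$ is the ratio of two algebraic numbers, hence algebraic.

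The main obstacle is the first step: verifying that Lemma 7.5.5 of \cite{H90} genuinely applies in the present Gross--Prasad setting under \ref{knownknown} and yields the cup-product identification of the archimedean period with a pairing in coherent cohomology on the Shimura subvariety attached to $G'$. The compatibility $\Delta^{\prime,nc,+}(\lambda')=\Delta^{nc,+}(\lambda)$ of (\ref{compat}) is what makes the Blattner vectors of $\pi_\lambda$ and $\pi_{\lambda'}^{\vee}$ pair correctly against $\wedge^{q(\lambda)}\fp^-$ versus $\wedge^{q(\lambda)}\fp^{\prime,-}$; once this combinatorial match is in place the rest of the argument is a mechanical translation of the rational cup-product formalism of \cite{H90}.
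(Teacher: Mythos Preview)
Your approach is the same as the paper's: the paper states that the theorem ``is an immediate consequence of Lemma 7.5.5 of \cite{H90} and the definitions,'' and you have correctly unpacked what this means --- namely, that under Hypothesis \ref{knownknown} the period integral $I^{can}$ is identified with a cup product in coherent cohomology (via Proposition \ref{dbarcompat}(a) and Lemma 7.5.5 of \cite{H90}), cup products respect the $\Qbar$-rational structures, and therefore $P_\alpha$ is a ratio of algebraic numbers.

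One small cleanup: your appeal to Proposition \ref{GPpairing} for nonvanishing is both unnecessary and slightly off. It is unnecessary because if $I^{can}$ happens to vanish identically then $P_\alpha = 0$, which is certainly algebraic; what you actually need nonzero is $I^{mot}$ on some rational pair, and that is immediate from the definition of $I^{mot}$ as a generator taking the rational subspaces onto $E(\pi)\otimes E(\pi')$. It is slightly off because Proposition \ref{GPpairing} assumes the ``sufficiently regular'' condition $|(\lambda,\alpha)| > N(G)$, which is stronger than Hypothesis \ref{regw}, and it only produces \emph{some} $\pi'$ rather than the given one. Dropping that sentence leaves your argument intact and matching the paper.
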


This can be refined to give vector-valued Gross-Prasad invariants, depending on complex embeddings of the coefficient fields,
whose entries belong to specific number fields.  Combining Theorem \ref{GPknownknown} with ergodic considerations, along
the lines of \ref{BSadelic}, we obtain examples of the rationality criteria provided by Theorem 7.6 and Corollary 7.7.1 of \cite{H90};
the point is that (a) of Proposition \ref{dbarcompat} implies hypothesis (b) of Theorem 7.6 of {\it loc. cit.}  Theorem \ref{GPknownknown}
is also of interest in proving period relations, and will be considered
at length in a future paper.  

\medskip

On the other hand, (b) of Proposition \ref{dbarcompat}, together with  Theorem \ref{mainthm}, gives explicit
examples of rationality
criteria that have nothing to do with coherent cohomology.   It is plausible that when $\pi$ does not
satisfy \ref{knownknown} then there is no $\pi'$ pairing non-trivially with $\pi$ for which
the $P_{\alpha}(\pi,\pi')$ are algebraic.  On the other hand, it is possible that $\pi^{\vee}$ satisfies
\ref{knownknown} even though $\pi$ does not.   Since a rationality criterion for $\pi^{\vee}$ easily gives
rise to one for $\pi$ (using cup products on $H$ rather than on $H'$), it is natural to wonder whether 
for all $\pi$ satisfying \ref{regw}, either $\pi$ or $\pi^{\vee}$ also satisfies \ref{knownknown}.  This seems
unlikely:  in the notation of \ref{knownknown}, suppose $a_{r_v,v} = \lambda_{n,v}$ for some but not all
$v$.  The above reasoning then suggests that, except possibly for some degenerate cases, neither $\pi$ nor 
$\pi^{\vee}$ satisfies \ref{knownknown}.

\end{document}